\begin{document}

\title{A Mathematical Proof of the Four-Color Conjecture (1): Transformation Step
}


\author{Jin Xu         
}


\institute{J. Xu \at
              Key Laboratory of High Confidence Software Technologies (Peking University), Ministry of Education, China \\
              School of Electronics Engineering and Computer Science, Peking University, Beijing 100871, China\\
}

\date{Received: date / Accepted: date}

\maketitle

\begin{abstract}

The four-color conjecture has puzzled mathematicians for over 170 years and has yet to be proven by purely mathematical methods. This series of articles provides a purely mathematical proof of the four-color conjecture, consisting of two parts: the transformation step and the decycle step. The transformation step uses two innovative tools, contracting and extending operations and unchanged bichromatic cycles, to transform the proof of the four-color conjecture into the decycle problem of 4-base modules. Moreover, the decycle step solves the decycle problem of 4-base modules using two other innovative tools: the color-connected potential and the pocket operations. This article presents the proof of the transformation step.

\keywords{four-color conjecture \and unchanged bichromatic-cycle maximal planar graphs  \and  contracting and extending operation \and 4-base module \and decycle coloring \and  color connected-potential  \and pocket operation}
\end{abstract}

\section{Introduction}
\label{intro}
Maximal planar graphs (MPGs), a class of planar graphs with the maximum number of edges,  play a crucial role in studying the four-color conjecture (FCC). Since the late 1970s, mathematicians have devoted a tremendous amount of effort to examining the characteristics of MPGs, including their structures, constructions, and colorings, in hopes of proving FCC. To learn more about these studies, please refer to \cite{r1,r2,r3,r4,r5,r6,r7,r8,r9,r10,r11}.   However, proving FCC presents numerous challenges, leading to the proposal of new conjectures such as the uniquely four chromatic planar graphs conjecture \cite{r3,r4}, which further enriches the field of maximal planar graph theory \cite{r11}.

\subsection{Structure of MPGs}
Euler's formula is a powerful tool for analyzing the structural features of planar graphs. It is commonly known that this formula has yielded many results related to the structure of planar graphs, the most important of which is the minimum degree $\delta(G)$ of every planar graph $G$ is at most 5. Specifically, $3\leq \delta(G) \leq 5$ when $G$ is an MPG. There are several variations of Euler's formula, based on which many ``discharging'' approaches have been proposed to explore the unavoidability and reducibility of some configurations of MPGs \cite{r5}.  Kempe claimed that the $k$-wheel configuration ($k=3,4,5$) is reducible, and he used the Kempe change to prove FCC  \cite{r1}. However, Heawood pointed out a hole in Kempe's proof for the $5$-wheel  \cite{r6}. This led to an interest in proving the reducibility of this type of configuration. The basic idea was to search exhaustively for unavoidable 5-wheel-based configurations and then attempt to prove that each is reducible. Unfortunately, the number of such configurations was vast, making it impossible to examine the reducibility of each manually. In 1969, Heesch \cite{r5} introduced the brilliant method of ``discharging'' at an academic conference, based on which suitable rules were designed to explore the unavoidable sets of configurations with the help of computer programs. This idea later inspired Haken and Appel, who spent seven years investigating the configurations in more detail. Eventually, in 1976, with the help of Koch and about 1200 hours of a fast mainframe computer, they gave a computer-based proof of FCC \cite{r7,r8}. They used 487 discharging rules and 1936 unavoidable configurations. Appel and Haken's work opened an avenue for logical reasoning using computers.

There have been concerns among scholars regarding the dependability of computer-assisted proofs for FCC. Even Haken and Appel made several revisions to their work before publishing their final version in 1976. They analyzed 1,936 reducible unavoidable configurations. In 1996, Robertson et al. \cite{r9} utilized a similar method to offer an improved proof of FCC. They were successful in decreasing the number of reducible configurations in the unavoidable set to 633. Despite these advances, mathematicians are still hopeful for a rigorous and concise mathematical proof for FCC, which has not been presented since 1852.

\subsection{Construction of MPGs}

To construct MPGs, in 1891, Eberhard \cite{r13} introduced the concept of a \textbf{pure chord-cycle}, i.e., a cycle in an MPG that contains no vertices in its interior. He proposed a generation system $<K_4: \{\phi_1,\phi_2,\phi_3\}>$ based on three operators $\phi_1,\phi_2,\phi_3$ derived from pure chord-cycle (as shown in Figure \ref{newfig1-1}), that can construct all MPGs from an initial graph $K_4$ (i.e., a complete graph of order 4).

\begin{figure}[H]
	\centering	
\includegraphics[width=12cm]{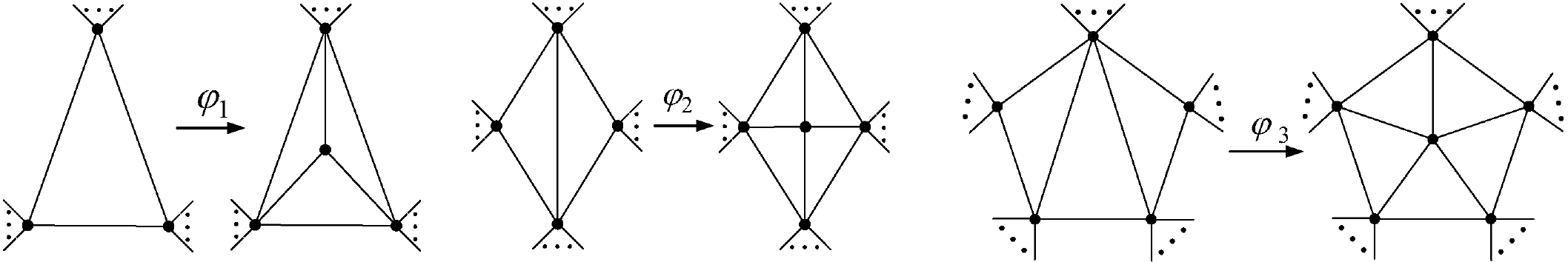}\\
	\caption{The three operators proposed by Eberhard} \label{newfig1-1}
\end{figure}

In 1936, Wagner  \cite{r14}  proposed the technique of  \textbf{edge-flipping} to achieve the transformation of two MPGs with identical order. This involves a local deformation of an MPG $G$, wherein a diagonal edge $ac$ in a diamond subgraph $abcd$ is replaced with the other edge $bd$, provided that $bd$ is not an edge of $G$. Refer to Figure \ref{newfig1-2} for a visual representation.

\begin{figure}[H]
	\centering	
\includegraphics[width=4.2cm]{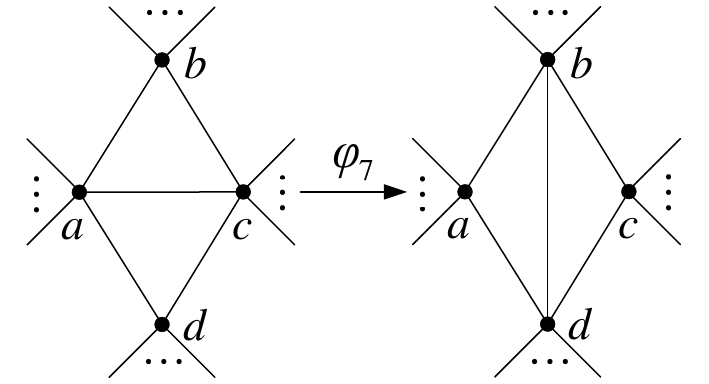}\\
	\caption{Edge-flipping} \label{newfig1-2}
\end{figure}

In 1974, Barnette \cite{r15} and Butler \cite{r16}  independently built a generation system  $<Z_{20}: \{\phi_4,\phi_5,\phi_6\}>$ (see Figure \ref{newfig1-3}) which can generate all 5-connected MPGs, where $Z_{20}$ is the regular dodecahedron.

\begin{figure}[H]
	\centering	
\includegraphics[width=12cm]{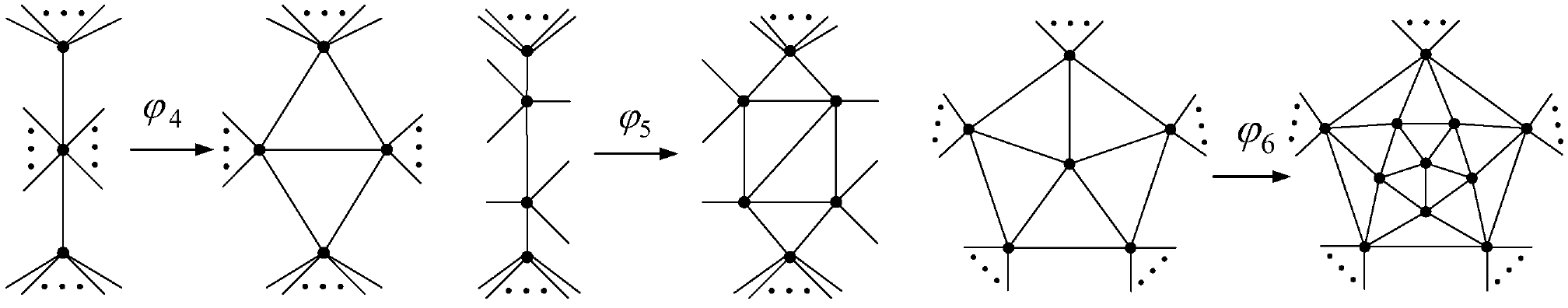}\\
	\caption{The three operators $\phi_4, \phi_5$ and $\phi_6$ proposed by Barnette and Butler} \label{newfig1-3}
\end{figure}

In 1984, Batagelj  \cite{r17} extended the generation system $<Z_{20}; \{\phi_4,\phi_5,\phi_6\}>$ to $<Z_{20}; \{\phi_4,\phi_5,\phi_7\}>$, which can generate 3- and 4-connected MPGs of minimum degree 5. Analogously, he depicted a method to construct (even) MPGs with a minimum degree of at least 4.

In 2016, a new method for constructing MPGs called the \textbf{contracting
and extending operation (CE operation for short)}, is proposed in \cite{r18}.  CE-system has four pairs of operators and one starting graph $K_4$. Compared with the existing approaches, the system can naturally connect the coloring with the construction of MPGs.  In Section \ref{sec:ecsystem}, we will delve into this system and its workings.

\subsection{Colorings of MPGs}

Regarding the colorings of MPGs, the Kempe change (or K-change for short) is a fundamental and essential technique proposed by Kempe in 1879. Its basic function is to induce a new 4-coloring from an existing 4-coloring. Currently, researchers are focused on determining whether the $k$-chromatic graph (where $k \geq 3$) is a Kempe graph - meaning that its $k$-colorings can be generated from a given $k$-coloring of the graph using K-changes. For more information on K-changes, please refer to the review \cite{r19}.

There exist numerous MPGs that are non-Kempe. For instance, Figure \ref{newfig2-1} displays three 4-colorings, denoted as $f_1$, $f_2$, and $f_3$, where $f_3$ cannot be obtained from $f_1$ or $f_2$ via K-change. In this paper, we identify a group of non-Kempe MPGs called unchanged bichromatic-cycle maximal planar graphs (UBCMPGs) that possess noteworthy characteristics. We introduce the concept of 4-base modules based on UBCMPGs, and prove that the four-color problem can be transformed into the decycle problem of 4-base modules using CE operations. We also tackle this problem by introducing two innovative tools, the color-connected potential and the pocket operations, proving that every 4-base module contains a decycle coloring.

\section{Preliminaries}
\label{sec:1}

All graphs analyzed in this paper are finite, simple, and undirected. We use $V(G)$ and $E(G)$ to denote the \emph{vertex set} and the \emph{edge set} of a graph $G$, respectively. The \emph{order} of  $G$ refers to the number of vertices in $V(G)$. If there is a vertex $v\in V(G)$ with an adjacent vertex $u\in V(G)$, we call $u$ a \emph{neighbor} of $v$. The set of all neighbors of $v$ is referred to as the \emph{neighborhood} of $v$ in $G$ and is denoted by $N_G(v)$. Additionally, we define $N_G[v]=N_G(v)\cup \{v\}$, which is known as the \emph{closed neighborhood} of $v$ in $G$. The \emph{degree} of a vertex $v$ in $G$ is denoted by $d_G(v)$ and is defined as the cardinality of $N_G(v)$, i.e., $d_G(v)=|N_G(u)|$. When there is no ambiguity, we will use $V, E, d(v), N(v), \delta$, and $\Delta$ instead of $V(G)$, $E(G)$, $d_G(v)$, $N_G(v)$, $\delta(G)$, and $\Delta(G)$, respectively. Furthermore, we use $\delta(G)$ and $\Delta(G)$ to represent the minimum and maximum degree of $G$, respectively. A graph $H$ is called a \emph{subgraph} of $G$ if $V(H) \subseteq V(G)$ and $E(H)\subseteq E(G)$; furthermore, if two vertices $x$ and $y$ are connected by an edge $xy$ in $H$ if and only if $xy\in E(G)$, then $H$ is called a \emph{subgraph induced by $V(H)$}, denoted by $G[V(H)]$. By combining two vertex-disjoint graphs $G$ and $H$ and adding edges to connect every vertex of $G$ to every vertex of $H$, one obtains the \emph{join} of $G$ and $H$, denoted by $G\vee H$. We denote by $K_n$ and $C_n$  the complete graph and cycle of order $n$, respectively.
The join $C_{n}\vee K_{1}$ is called a \emph{n-wheel with $n$ spokes}, denoted by $W_{n}$, where $C_n$ and $K_1$ are called the \emph{cycle} and \emph{center} of  $W_{n}$, respectively. Let $V(K_1)=\{x\}$ and  $C_n=x_1x_2\ldots x_nx_1$. Then, $W_n$ can be represented as  $x$-$x_1x_2\ldots x_nx_1$.  The \emph{length} of a path or a cycle is the number of edges the path or the cycle contains.  We call a path (or cycle) an \emph{$\ell$-path} (or \emph{$\ell$-cycle}) if its length is equal to $\ell$.

A \emph{planar graph} is a graph that can be drawn on a plane in a way that edges only meet at their common ends. This drawing is called a \emph{plane graph} or \emph{planar embedding} of the graph. In this paper, any planar graph we consider will refer to one of its planar embeddings. A \emph{maximal planar graph} (MPG) is a planar graph to which no new edges can be added without violating its planarity. Additionally, a \emph{triangulation} is a planar graph where every face is bounded by three edges, including the infinite face. It can be easily demonstrated that an MPG is equivalent to a triangulation.

Let $G$ be a planar graph that has a boundary $C$ on its outer face. If $C$ contains at least four edges and all of its other faces are triangles, then $G$ is referred to as a \emph{semi-maximal planar graph with respect to $C$}, or a \emph{SMPG} for short, where $C$ is called the \emph{outer-cycle} of $G^C$. SMPGs are also known as \emph{configurations}. Figure \ref{configurations} displays the 55-configuration and 56-configuration, two well-known SMPGs.

 \begin{figure}[H]
	\centering	
\includegraphics[width=6cm]{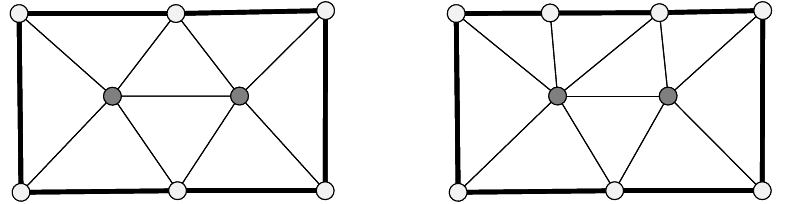}\\
	\caption{55-configuration (left) and  56-configuration (right)} \label{configurations}
\end{figure}

A \emph{$k$-vertex-coloring}, or simply a $k$-coloring, of a
 graph \emph{G} is a mapping $f$ from  $V(G)$ to the color sets $C(k)=\{1,2,\ldots,k\}$ such
 that $f(x)\neq f(y)$ if $xy\in E(G)$.  A graph $G$ is \emph{$k$-colorable} if it has a $k$-coloring. The minimum $k$ for which a graph $G$ is $k$-colorable is called the \emph{chromatic number} of $G$, denoted by $\chi(G)$. If $\chi(G)=k$, then  $G$ is called a \emph{$k$-chromatic graph}.
Alternatively,
 each $k$-coloring $f$ of  $G$ can be viewed as a partition  $\{V_{1}, V_{2},\cdots, V_{k}\}$ of $V$,
 where $V_{i}$, called the \emph{color classes} of $f$, is an independent set (every two vertices in the set are not adjacent). 
 Clearly, such a partition is unique. So it can be written as $f=(V_{1},V_{2},\cdots,
 V_{k})$. 
Two $k$-colorings $f=(V_{1},V_{2},\cdots,
 V_{k})$ and $f'=(V'_{1},V'_{2},\cdots, V'_{k})$ of $G$ are \emph{equivalent} if there exists a permutation $\sigma$ on \{1,2,\ldots, k\} such that $\sigma(i)=j_i$ and  $V_i=V'_{j_i}$ for $i=1,2,\ldots, k$. The set of $k$-colorings that are equivalent to $f$ is called \emph{$f$-equivalence class}. It is easy to see that the $f$-equivalence class contains $k!$ $k$-colorings.
 For a $k$-chromatic graph $G$, we use $C^{0}_{k}(G)$ to
    denote the set of $k$-colorings of $G$ such that no two colorings belong to the same equivalence class and the $i$th color class of each  $k$-coloring is colored with $i$.

{\bf Remark 1}   In the absence of any specific notation, we designate the color set as \{1, 2, 3, 4\} for the 4-colorings of  MPGs and SMPGs.

Let $G$ be a $k$($\geq 3$)-chromatic graph  and  $H$  a subgraph of $G$. For any $g\in C_k^0(G)$,
we use $g(H)$ to denote the set of colors assigned to $V(H)$ under $g$. If $|g(H)|=2$, then we call $H$ a \emph{bichromatic subgraph of $G$ under $g$}; in particular, when $H$ is a cycle (or path),  we call it a \emph{bichromatic cycle} (or \emph{bichromatic path}) of $g$. Let $h\in C_k^0(H)$. If $h(v)=g(v)$ for every $v\in V(H)$, then we call $h$ the \emph{restricted coloring of $g$ to $H$} and call $g$ an \emph{extended coloring of $h$ to $G$}.


Let $f$ be a 4-coloring of a 4-chromatic MPG (or  SMPG) $G$.  If $f$  does not contain any bichromatic cycle, then  $f$ is called a \emph{tree-coloring} of $G$ and $G$ is \emph{tree-colorable}; otherwise, $f$ is a \emph{cycle-coloring} and $G$ is \emph{cycle-colorable} \cite{r12,r20,r21}.

Given a $k$-chromatic  graph $G$ and a $k$-coloring $f$ of $G$, we use $G_{ij}^f$ to denote the subgraphs induced by vertices colored with $i$ and $j$ under $f$, and use $\omega(G_{ij}^f)$  to denote the number of components of $G_{ij}^f$, $i\neq j$. If $f$ is fixed, we can omit the mark $f$ in $G_{ij}^f$. The components of $G_{ij}$ are called  \emph{$ij$-components} of $f$; particularly, we refer to an $ij$-component as an \emph{$ij$-path} of $f$ and \emph{$ij$-cycle} of $f$ if it is a path and a cycle, respectively.  Let
\[
\omega(f)=\sum\limits_{1\leq i<j\leq k} \omega(G_{ij}^f).
\]
When $\omega(G_{ij}^f)\geq 2$, the \emph{Kempe-change} (or \emph{K-change} for short) on an $ij$-component of $f$ is to interchange colors $i$ and $j$ of vertices in the $ij$-component.

Let $G$ be a cycle-colorable MPG (or SMPG) and $f$  a cycle-coloring of $G$. Suppose that $C$ is a bichromatic cycle of $f$ with $f(C)$=$\{i,j\}$, where $i\neq j$ and $i,j\in \{1,2,3,4\}$. The \emph{$\sigma$-operation of $f$ with respect to $C$}, denoted as $\sigma(f, C)$, is to interchange the colors $s$ and $t$ ($\{s,t\}=\{1,2,3,4\}\setminus \{i,j\}$) of vertices in the interior (or exterior) of $C$. Clearly, $\sigma(f,C)$  transform  $f$ into a new cycle-coloring of $G$, denoted by $f^c$. We refer to $f^c$ and $f$ as a pair of \emph{complement colorings (with respect to $C$)}.

 {\bf Remark 2} The $\sigma$-operation is essentially an extension of the $K$-change, which in turn comprises one or more $K$-changes. It's worth noting that the two colorings that result from applying $\sigma(f, C)$--which involve interchanging vertex colors within and outside of $C$--belong to the same equivalence class. As such, we can always assume that the colors of interior vertices of $C$ are swapped when we carry out a $\sigma(f, C)$ operation.

Let $G$ be a 4-chromatic MPG. If a 4-coloring $f_0$ can be obtained from $f$ by a sequence of $K$-changes, then  we say that $f$ and $f_0$ are \emph{Kempe-equivalent}. For an arbitrary 4-coloring $f\in C_4^0(G)$, we refer to
 \begin{center}
 $F^{f}(G)=\{f_0; f_0$ and $f$ are Kempe-equivalent, $f_0\in C_4^0(G)$\}
\end{center}
as \emph{the Kempe-equivalence class of $f$}. Furthermore, if  $F^{f}(G)=C_4^0(G)$, then $G$ is called a \emph{Kempe} graph; otherwise, a \emph{non-Kempe} graph.

For terms and notations not defined here, readers can refer to \cite{r22,r10,r11,r12}.

\section{Unchanged Bichromatic-Cycle Maximal Planar Graphs}
This section is aimed at introducing the unchanged bichromatic-cycle MPGs. It will cover the definitions, as well as the properties and classifications associated with them.

\subsection{Definitions and properties}
Let $G$ be a 4-chromatic MPG  with $\delta(G)\geq 4$, $f\in C_4^0(G)$, and $C\in C^2(f)$, where $C^2(f)$ represents the set of bichromatic cycles of $f$. If $|f'(C)|$=2 holds for any $f'\in F^f(G)$, then we call $C$  an \emph{unchanged bichromatic-cycle} (abbreviated by UB-cycle) of $f$ (also of $G$), and $f$ an \emph{unchanged bichromatic-cycle coloring with respect to $C$} (abbreviated by UBC-coloring with respect to $C$) of $G$. Also, $G$ is called an \emph{unchanged bichromatic-cycle MPG  with respect to $C$} (abbreviated by \emph{UBCMPG} with respect to $C$). The set of bichromatic cycles of all colorings belonging to $F^f$ is called the \emph{Kempe cycle-set on $f$}, denoted by $C^2(F^f)$, i.e.

\begin{equation}
C^2(F^f)=\bigcup \limits _{f'\in F^f(G)}C^2(f')
\end{equation}

Consider a graph $G$ with two cycles $C_1$ and $C_2$. We say that $C_1$ and $C_2$ are \emph{intersecting} if there exist vertices in the interior of $C_2$ and $C_1$ respectively that belong to $V(C_2)$ and $V(C_1)$. Otherwise, if there are no such vertices, we say that $C_1$ and $C_2$ are \emph{nonintersecting}.

Figure \ref{newfig2-1} $(a)\sim (c)$ depict three 4-colorings $f_1$, $f_2$, and $f_3$ of the minimum UBCMPG. The bold lines in Figures $(a)$ and $(b)$ represent UB-cycles. We observe that $F^{f_1} = \{f_1, f_2\}$ and $F^{f_3} = \{f_3\}$.

\begin{figure}[H]
  \centering
  \includegraphics[width=10cm]{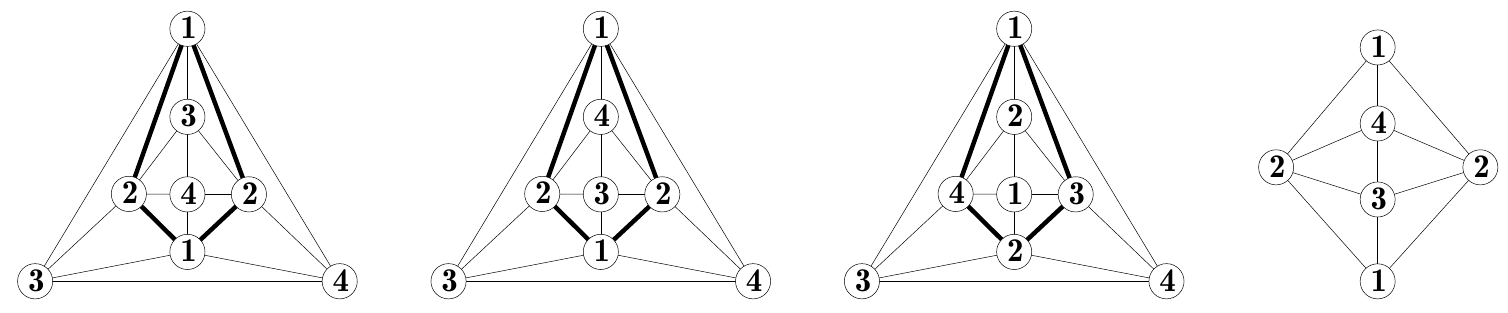}\\
  \hspace{8mm}(a) $f_1$ \hspace{1.8cm}  (b) $f_2$ \hspace{1.8cm}  (c) $f_3$ \hspace{1.4cm} (d) $B^4$
  \caption{The UBCMPG with the minimum order}\label{newfig2-1}
\end{figure}

{\bf Remark 3} A UBCMPG can have multiple UB-cycles.

\begin{theorem}\label{thm2.1}
Let $G$ be a 4-chromatic MPG with $\delta(G)\geq 4$, $f\in C_4^0(G)$, and $C\in C^2(f)$. Then, $C$ is a UB-cycle of $f$ if and only if for any $C'\in C^2(F^f)$ with $f(C')\neq f(C)$, $C$ and $C'$ are nonintersecting.
\end{theorem}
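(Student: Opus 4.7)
The plan is to prove both directions by contradiction: for the forward direction I use a planar case analysis plus a $\sigma$-operation; for the backward direction I invoke a separation principle relating $(a,\beta)$-components to $(b,\gamma)$-cycles. I read the condition ``$f(C')\neq f(C)$'' as a pairwise comparison in a common coloring $f'\in F^f(G)$ in which $C'$ is bichromatic, which is well-defined because, under the UB hypothesis, $C$ remains bichromatic in every such $f'$.

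For the forward direction, suppose $C$ is a UB-cycle and that some $C'\in C^2(F^f)$, bichromatic in $f'\in F^f(G)$ with $f'(C')=\{s,t\}\neq\{a,b\}=f'(C)$, intersects $C$. If $\{s,t\}\cap\{a,b\}=\emptyset$, then $C$ and $C'$ are vertex-disjoint simple cycles in a plane graph and must therefore be nested or lie in each other's exteriors, contradicting ``intersecting''. If $|\{s,t\}\cap\{a,b\}|=1$, assume WLOG $\{a,b\}=\{1,2\}$ and $\{s,t\}=\{1,3\}$: any shared vertex of $C$ and $C'$ is colored $1$, so each maximal subpath of $C$ lying strictly inside $C'$ has its endpoints on $C'$ (colored $1$) and its adjacent interior $C$-vertices colored $2$. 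Hence $C$ possesses $2$-colored vertices both strictly inside and strictly outside $C'$. Performing $\sigma(f',C')$---which swaps colors $\{2,4\}$ in the interior of $C'$ and is valid because no edge of $G$ crosses $C'$---produces a coloring in $F^f(G)$ under which $C$ acquires three colors $\{1,2,4\}$, contradicting the UB property.

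For the backward direction, suppose the nonintersection condition holds but $C$ is not UB. Choose a shortest K-change chain $f=f_0,f_1,\ldots,f_m$ with $|f_k(C)|=2$ for $k<m$ and $|f_m(C)|\geq 3$. Under $f_{m-1}$, $C$ is bichromatic with some pair $\{a,b\}$, and, as a short case check shows, the final K-change must act on an $(a,\beta)$-component $H$ (for some $\beta\notin\{a,b\}$, possibly after relabeling $a\leftrightarrow b$) that contains some but not all $a$-colored vertices of $C$. Writing $\{a,b,\beta,\gamma\}=\{1,2,3,4\}$, I then invoke the following (folklore) separation principle for 4-colorings of plane triangulations: two $a$-colored vertices lying in distinct $(a,\beta)$-components are strictly separated in the plane by some bichromatic $(b,\gamma)$-cycle. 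Applied to the split $a$-vertices of $C$, this yields a $(b,\gamma)$-cycle $C^*\in C^2(f_{m-1})\subseteq C^2(F^f)$ with $f_{m-1}(C^*)=\{b,\gamma\}\neq\{a,b\}=f_{m-1}(C)$. Since $C^*$ separates two $a$-vertices that lie on $C$, the cycle $C$ must cross $C^*$ at their shared $b$-colored vertices, so by the Jordan curve theorem each of $C,C^*$ has a vertex strictly inside the other; thus they intersect, contradicting the hypothesis.

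The main obstacle I expect is justifying the separation principle used in the backward direction. The easy half (same $(a,\beta)$-component implies not separated by any $(b,\gamma)$-cycle) follows immediately from planarity, since an $(a,\beta)$-path is vertex-disjoint from any $(b,\gamma)$-cycle and hence lies wholly on one side of it. The hard half exploits the triangulated structure: in the MPG $G$, every vertex adjacent to $K_u$ but not in $K_u$ must be colored $b$ or $\gamma$, since otherwise the connecting edge would force such a vertex into $K_u$; so the ``outer link'' of $K_u$ is composed entirely of $(b,\gamma)$-vertices. Extracting an honest separating $(b,\gamma)$-cycle from this outer structure---and certifying that it encloses $u$ but not $v$---requires careful bookkeeping around multiply-bounded or cut-vertex-like configurations of $K_u$, and is where the main technical care will be needed.
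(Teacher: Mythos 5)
Your forward direction is essentially the paper's necessity argument: find the coloring $f'\in F^f(G)$ in which $C'$ is bichromatic, note that $C$ is still bichromatic there by the UB hypothesis, and apply $\sigma(f',C')$ to break $C$. The paper states this in three lines; your case split on $|\{s,t\}\cap\{a,b\}|$ and the observation that the maximal subpaths of $C$ strictly inside $C'$ begin and end with vertices of the swapped color is exactly the detail the paper leaves implicit, and it is correct.

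Your backward direction, however, is a genuinely different route. The paper's sufficiency proof rests on the single assertion that \emph{every} coloring in $F^f(G)$ is reached by $\sigma$-operations on bichromatic cycles, so that nonintersection with every $C'\in C^2(F^f)$ immediately preserves $C$; but $F^f(G)$ is generated by arbitrary $K$-changes on $ij$-components, which need not be cycles, so that assertion is not self-evident and is left unproved. You instead take a shortest $K$-change chain that first destroys the bichromaticity of $C$, observe (correctly) that the offending $K$-change must act on an $(a,\beta)$-component containing a proper nonempty subset of the $a$-vertices of $C$, and then invoke the Kempe-chain separation lemma for plane triangulations to manufacture a $(b,\gamma)$-cycle of $f_{m-1}$ that separates, and hence intersects, $C$. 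This fills precisely the gap in the paper's one-sentence sufficiency argument, at the cost of importing the separation lemma, whose proof you only sketch; that lemma is a standard fact for $4$-colored triangulations (the boundary of the union of triangles meeting a $(a,\beta)$-component is a closed walk in $G_{b\gamma}$ from which a separating cycle can be extracted), so the debt is payable. One residual caution for both you and the paper: under the paper's literal (conjunctive) definition of ``intersecting,'' your $C^*$ is only shown to have $C$ crossing it, not necessarily to have a vertex strictly inside $C$; this matches the operational meaning the theorem needs ($\sigma(f_{m-1},C^*)$ would destroy $C$) but, like the paper's own proof, glosses over the imprecision in the definition rather than resolving it.
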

\begin{proof}
(Necessity). Suppose, to the contrary, that there is a cycle $C'\in C^2(F^f)$ with $f(C')\neq f(C)$ such that  $C$ and $C'$ are intersecting. Then, $C'$ is a bichromatic cycle of some coloring in  $F^f(G)$, say $f'$, i.e., $C'\in C^2(f')$. Since $C$ is a UB-cycle of $f$, it follows that $C\in C^2(f')$. Let $f''=\sigma(f',C')$. Since  $C$ and $C'$ are intersecting, we have that $C\notin C^2(f'')$,  a contradiction.

(Sufficiency). Observe that every $f'\in F^f(G)$ is obtained by implementing $\sigma$-operation of a 4-coloring $g\in F^f(G)$ with respect to a bichromatic cycle of $g$. We have that $C\in C^2(f')$ for every $f'\in F^f(G)$, since $C$ and $C'$ are nonintersecting for every $C'\in C^2(F^f)$. \qed
\end{proof}

The following result holds directly from Theorem \ref{thm2.1}.

\begin{corollary}\label{coro2.2}
Let $G$ be a UBCMPG with respect to $C$. Then, every vertex on $C$ has degree at least 5 in $G$.
\end{corollary}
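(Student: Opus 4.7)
The plan is to argue by contradiction using Theorem \ref{thm2.1}. Suppose some $v\in V(C)$ has $d(v)=4$; I aim to exhibit a bichromatic cycle $C'\in C^2(F^f)$ with $f(C')\neq f(C)$ that intersects $C$, contradicting Theorem \ref{thm2.1}. The natural candidate is the neighborhood cycle $T$ of $v$, which is a $4$-cycle in $G$ because $v$ has degree $4$ in the MPG. Since $v$ lies in the bounded region enclosed by $T$, the vertex $v\in V(C)$ is automatically in the interior of $T$, so the ``$V(C)$-side'' of the intersection condition comes for free.

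I would first pin down the local structure. Let $a,b$ be the two neighbors of $v$ on $C$ and $u_3,u_4$ the other two. Normalizing so that $f(v)=1$ and $f(C)=\{1,2\}$ gives $f(a)=f(b)=2$. Observe that $a$ and $b$ cannot be adjacent in the neighborhood $4$-cycle, for then $ab\in E(G)$ would join two color-$2$ vertices. Hence the cyclic order at $v$ is $a,u_3,b,u_4$, making $T=au_3bu_4$ a $4$-cycle with edges $au_3, u_3b, bu_4, u_4a$; in particular each of $u_3,u_4$ is adjacent to $a,b$ and to $v$, forcing $\{f(u_3),f(u_4)\}\subseteq\{3,4\}$. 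Using the rotation at $v$, the edges $vu_3$ and $vu_4$ point into opposite sides of $C$, so I may assume $u_3$ is in the interior of $C$ and $u_4$ in the exterior; this gives $u_3\in V(T)\cap\mathrm{int}(C)$, and together with $v\in V(C)\cap\mathrm{int}(T)$ we see that whenever $T$ happens to be bichromatic under some coloring in $F^f(G)$, the cycles $C$ and $T$ intersect in the sense of the paper.

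Now I split into two cases. If $f(u_3)=f(u_4)$, then $T$ is already bichromatic under $f$ with color set $\{2,f(u_3)\}\neq\{1,2\}=f(C)$; combined with the intersection property this contradicts Theorem \ref{thm2.1} outright. If $f(u_3)\neq f(u_4)$, say $f(u_3)=3$ and $f(u_4)=4$, the plan is to produce a Kempe-equivalent $f'\in F^f(G)$ under which $T$ becomes bichromatic. The crucial planar-separation step is that any path in $G_{34}^f$ from $u_3$ to $u_4$ would have to cross $C$, but every vertex of $C$ is colored $1$ or $2$ and hence cannot lie in $G_{34}^f$---so $u_3$ and $u_4$ lie in distinct $34$-components. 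A Kempe change on the $34$-component containing $u_3$ therefore flips $u_3$ to color $4$ without touching $u_4$, producing $f'$ with $f'(u_3)=f'(u_4)=4$; now $T$ is bichromatic under $f'$ with color set $\{2,4\}$, so $T\in C^2(F^f)$. Since $f(T)=\{2,3,4\}\neq\{1,2\}=f(C)$ and $C,T$ still intersect (the embedding is fixed), Theorem \ref{thm2.1} again yields a contradiction.

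The step I expect to be the main obstacle is the planar-separation argument in the second case---verifying that the embedding genuinely forces $u_3$ and $u_4$ onto opposite sides of $C$, and that the absence of colors $3$ and $4$ on $C$ then blocks any $34$-Kempe chain from linking them. Once that separation is in hand, the recoloring and the appeal to Theorem \ref{thm2.1} are short; the remaining ingredients (ruling out $ab\in E(G)$ by the color clash and placing $v$ inside $T$) are routine local observations about degree-$4$ vertices in an MPG.
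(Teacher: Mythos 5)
Your proof is correct and follows the route the paper intends: the paper offers no argument beyond ``holds directly from Theorem~\ref{thm2.1},'' and your derivation---ruling out a degree-$4$ vertex $v$ on $C$ by showing its link $T=au_3bu_4$ becomes a bichromatic cycle intersecting $C$ after at most one Kempe change, with the separation of $u_3$ and $u_4$ into distinct $34$-components guaranteed because $C$ carries only colors $1$ and $2$---is exactly the missing verification. (Recall $\delta(G)\geq 4$ is built into the definition of a UBCMPG, so degree $4$ is indeed the only case to exclude.)
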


\subsection{Types}

Let $G$ be a 4-chromatic maximal planar graph with $\delta(G)\geq 4$, and $f\in C_4^0(G)$ be a cycle-coloring. If $C^2(f)$ contains no UB-cycle, then we call $f$ a \emph{cyclic cycle-coloring}. Let $C_{4U}^0(G)$ denote the set of UBC-colorings of $G$,  $C_{4T}^0(G)$ denote the set of tree-colorings of $G$, and  $C_{4C}^0(G)$ denote the set of cyclic cycle-colorings of $G$. It is evident that $C_{4U}^0(G)\cap C_{4T}^0(G)=\emptyset, C_{4U}^0(G)\cap C_{4C}^0(G)=\emptyset$, $C_{4T}^0(G) \cap C_{4C}^0(G)=\emptyset$, and
\begin{equation}\label{equadd-1}
C_4^0(G)=C_{4U}^0(G)\cup C_{4T}^0(G)\cup C_{4C}^0(G)
\end{equation}

Based on Equation (\ref{equadd-1}), we divide UBCMPGs $G$ into the following types.

\begin{itemize}
 \item {Pure-type}: $C_4^0(G)=C_{4U}^0(G)$;
\vspace{0.1cm}

 \item{Tree-type}: $C_4^0(G)=C_{4U}^0(G)\cup C_{4T}^0(G)$;
\vspace{0.1cm}

 \item{Cycle-type}: $C_4^0(G)=C_{4U}^0(G)\cup C_{4C}^0(G)$;
\vspace{0.1cm}

 \item{Hybrid-type}: $C_4^0(G)=C_{4U}^0(G)\cup C_{4T}^0(G)\cup C_{4C}^0(G)$.
\end{itemize}

It is important to note that each of the above four types has corresponding graphs. The 17-order graph $G$, as presented in Figure \ref{fig4}, is a pure-type UBCMPG, where $C_4^0(G)$ includes eight pairs of complement UBC-colorings. Concerning pure-type UBCMPGs, we put forward a conjecture that requires further exploration.

\begin{conjecture}\label{conjecture1}
An MPG is a pure-type UBCMPG if and only if it is the graph shown in Figure \ref{fig4}.
\end{conjecture}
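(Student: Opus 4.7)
The plan is to split the biconditional into the two standard directions and recognize from the outset that sufficiency is essentially a finite verification, while necessity is the real challenge (and is presumably why the authors state it as a conjecture rather than a theorem).

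For the sufficiency direction, let $G_0$ denote the 17-order MPG of Figure \ref{fig4}. The stated fact that $C_4^0(G_0)$ consists of exactly eight complement pairs of UBC-colorings reduces sufficiency to a bounded case analysis: enumerate the 16 representatives of $C_4^0(G_0)$ (or 8 up to $\sigma$-complementation), and for each one explicitly exhibit a candidate UB-cycle $C$. To certify that $C$ is genuinely unchanged, I would apply Theorem \ref{thm2.1} in reverse: compute the Kempe-equivalence class $F^f(G_0)$ by iteratively performing all available K-changes, list every bichromatic cycle $C'$ appearing in any coloring of that class whose color pair differs from $f(C)$, and verify that each such $C'$ is nonintersecting with $C$ (one is contained in the interior, the other in the exterior). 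Because $|V(G_0)|=17$ and $|C_4^0(G_0)|=16$, this is completely mechanical.

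For the necessity direction, suppose $G$ is a pure-type UBCMPG; the goal is to force $G \cong G_0$. I would start from the strongest structural consequences of the definition. First, every $f\in C_4^0(G)$ has at least one UB-cycle $C_f$, and by Corollary \ref{coro2.2} every vertex on any $C_f$ has degree at least $5$ in $G$. Second, the nonintersection condition in Theorem \ref{thm2.1} is extremely restrictive: for each UB-cycle $C$, the interior of $C$ and the exterior of $C$ must be independently colorable in such a way that no bichromatic cycle of the complementary color pair ever spans both sides under any sequence of K-changes. I would next pick a UB-cycle $C$ of minimum length $\ell$ over all colorings of $G$, show $\ell$ must be small (plausibly $\ell=3$ or $\ell=4$, using the fact that a long bichromatic cycle creates room for a K-change-induced intersecting bichromatic cycle on the other color pair), and then reconstruct the two triangulated disks inside and outside $C$ one layer at a time, using at each step the fact that neither side may admit a tree-coloring or a cyclic cycle-coloring when combined with the other side.

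The main obstacle is clearly this uniqueness step. The pure-type condition is global — it must hold simultaneously for every element of $C_4^0(G)$ — so local discharging arguments in the style of Euler's formula are unlikely to suffice on their own; they must be coupled to the combinatorics of Kempe equivalence. I expect the correct tool is the CE operation introduced in the companion section \ref{sec:ecsystem}: since CE operations connect coloring to construction, the natural strategy is to show that any CE extension of $G_0$ (and symmetrically, any graph not obtainable as a CE contraction down to $G_0$) introduces either a tree-coloring or a cyclic cycle-coloring, thereby violating the equality $C_4^0(G)=C_{4U}^0(G)$. Making this rigorous — in particular, showing that no exotic pure-type example exists outside the CE orbit of $G_0$ — is, in my view, the essential difficulty and the reason the statement appears as a conjecture rather than a theorem.
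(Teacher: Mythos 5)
There is a fundamental mismatch here: the statement you are proving is labelled a \emph{conjecture} in the paper, and the authors explicitly say it ``requires further exploration'' --- the paper contains no proof of it, so there is no argument of theirs to compare yours against. Your proposal, read on its own terms, also does not constitute a proof. The sufficiency direction is indeed reducible to a finite computation on the 17-vertex graph $G_0$ (enumerate $C_4^0(G_0)$, compute each Kempe-equivalence class, and check the nonintersection criterion of Theorem \ref{thm2.1} for an exhibited candidate cycle), but you have not actually carried out that computation or exhibited a single UB-cycle, so even the ``easy'' half remains unverified. One small technical slip there: a bichromatic cycle in a proper coloring alternates two colors and therefore has even length, so your suggested minimum length $\ell=3$ is impossible; the relevant minimum is $\ell=4$.

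The necessity direction is the entire content of the conjecture, and your treatment of it is a strategy sketch that you yourself concede you cannot make rigorous (``the essential difficulty''). The proposed steps --- bounding the length of a minimum UB-cycle, reconstructing the two triangulated disks layer by layer, and showing every CE extension of $G_0$ introduces a tree-coloring or cyclic cycle-coloring --- are plausible directions, but none is accompanied by an argument, and the key claim that no pure-type example exists outside the CE orbit of $G_0$ is exactly the open problem restated in different language. So the honest verdict is that this is a research plan for an open conjecture, not a proof; presenting it as a ``proof proposal'' for the statement should not be mistaken for settling it in either direction.
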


\vspace{-0.5cm}
 \begin{figure}[H]
  \centering
  \includegraphics[width=12.5cm]{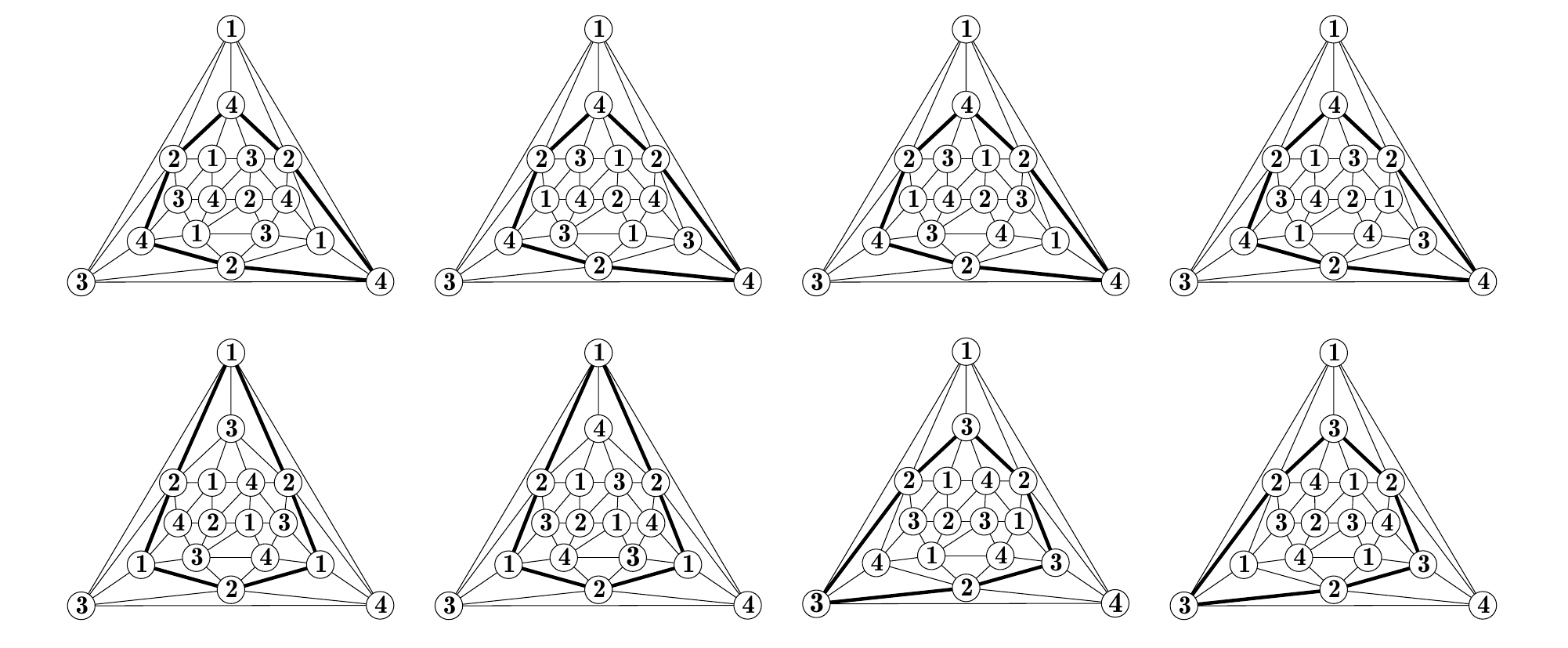}\\
  \includegraphics[width=12.5cm]{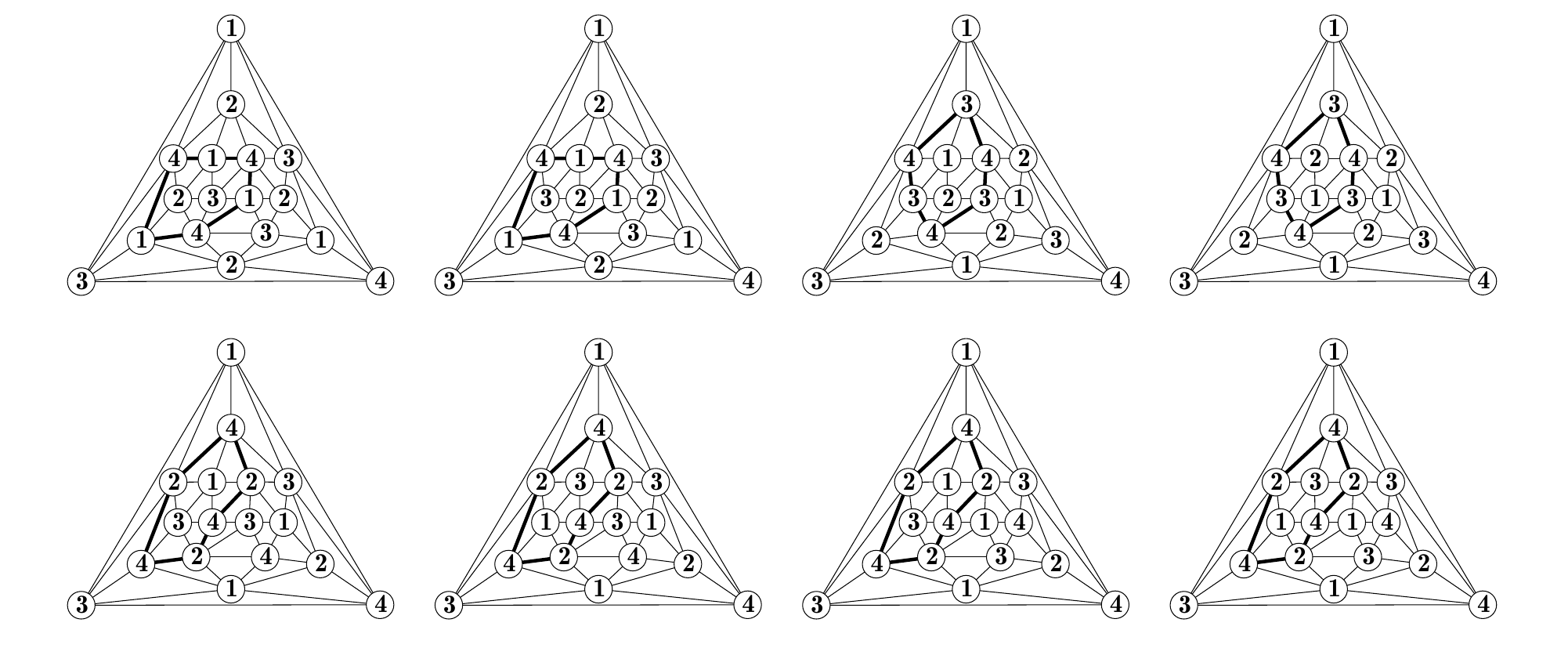}\\
  \caption{A pure-type UBCMPG and all its 4-colorings}\label{fig4}
\end{figure}

The graph shown in Figure \ref{newfig2-1} is a tree-type UBCMPG of order 8, which contains one pair of complement UBC-colorings and one tree-coloring. The graph shown in Figure \ref{fig5} is a tree-type UBCMPG of order 12, which contains two pairs of complement UBC-colorings and two tree-colorings.

{\bf Remark 4} For any $n\geq 2$, there is  a  tree-type UBCMPG $G$ of order $4n$ and $|C_4^0(G)|=2^{n-1}+2^{n-2}$, where the numbers of tree-colorings and UBC-colorings are $2^{n-2}$ and $2^{n-1}$, respectively \cite{r23}.

\begin{figure}[H]
  \centering
  \includegraphics[width=12.4cm]{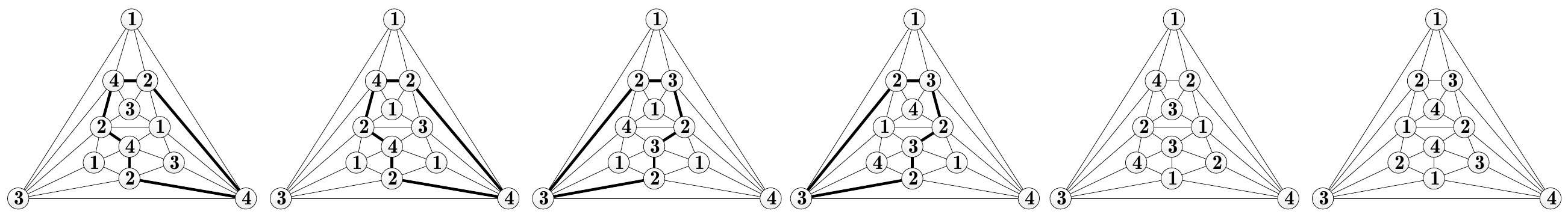}\\
  \caption{A tree-type UBCMPG of order 12 and all its 4-colorings}\label{fig5}
\end{figure}

\section{Base Modules}

\subsection{Definitions and Types}\label{sec4-1}
Let $G^C$ be an SMPG with respect to a cycle $C$. If there is another SMPG  $G_1^C$ with respect to $C$ such that $G^C\cap G_1^{C}=C$ and $G=G^C\cup G_1^{C}$ is a UBCMPG with respect to $C$, then we call $G^C$ and $G_1^C$ \emph{base modules with respect to $C$} (or \emph{base modules} for short). Let $f$ be a  UBC-coloring with respect to $C$ of $G$. We refer to  the restricted coloring of $f$ to $G^C$ and  $G_1^{C}$ as a \emph{module coloring} of $G^C$ and  $G_1^{C}$, respectively. The minimum base module (named identity module), denoted by $B^4$, is depicted in Figure \ref{newfig2-1} (d).

A base module $G^C$ with $|V(C)|=4$ is called a \emph{4-base module}.  Note that only 4-base models are used in this paper. Therefore, all the base modules mentioned in the following discussion are 4-base modules. To specifically refer to a 4-base module, we use the notation $G^{C_4}$.   

Three categories of 4-base modules exist based on module colorings. Let $G^{C_4}$ be a 4-base module with respect to 4-cycle $C_4$, and $f$ be a module coloring of $G^{C_4}$.

 \begin{figure}[H]
  \centering
 \includegraphics[width=7cm]{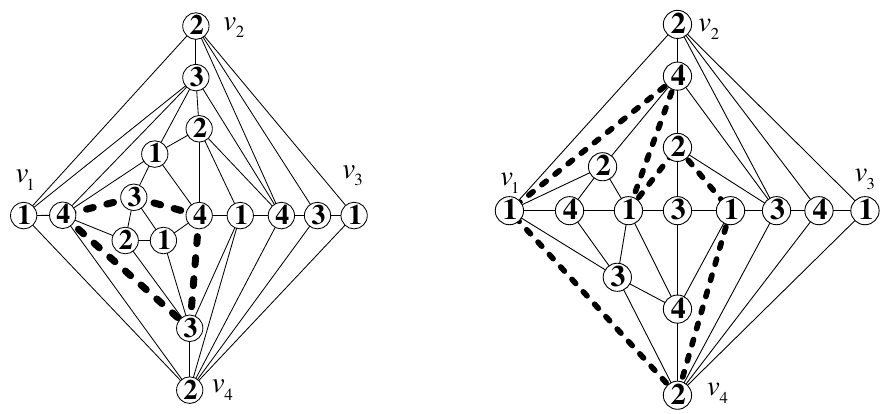}\\
  \caption {Examples of cycle-type 4-base module (left) and cyclic-cycle-type 4-base module (right)}\label{new1}
\end{figure}

\begin{itemize}
   \item If $C_4$ is the unique bichromatic cycle of $f$, then $G^{C_4}$ is called a  \emph{tree-type 4-base module};
   \item Suppose that $G^{C_4}$ contains a bichromatic cycle $C^{*} (\neq C_4)$ of $f$. If there exists a SMPG $G_1^{C_4}$ (with respect to $C_4$) such that $G^{C_4}\cup G_1^{C_4}$ is a UBCMPG (with respect to $C_4$) and $C^{\star}$ is a UB-cycle of  $G^{C_4}\cup G_1^{C_4}$, then we call $C^{*}$ a UB-cycle of $G^{C_4}$. If $G^{C_4}$ contains a UB-cycle, then  $G^{C_4}$ is  a \emph{cycle-type 4-base module}. The first graph shown in Figure \ref{new1}  depicts a cycle-type 4-base module, which contains a UB-cycle in the interior of $C_4=v_1v_2v_3v_4v_1$ (marked with  dashed bold lines);
   \item Let $C'(\neq C_4)$ be a bichromatic cycle of $f$. If $C'$ is not a UB-cycle of $f$, then $C'$ is called a \emph{cyclic cycle} of $G^{C_4}$. If $\mathbb{C}=C^2(F^f(G^{C_4}))\setminus \{C_4\}$ is not empty and contains only cyclic cycles, then  $G^{C_4}$ is called a  \emph{cyclic cycle-type 4-base module}.
   \noindent Since  the subgraph of $G^{C_4}$ induced by the set of vertices belonging to a cycle $C'\in \mathbb{C}$ and its interior is  a SMPG, denoted by $G^{C'}$, it follows that  the union of $G^{C'}$ over all $C'\in \mathbb{C}$ is one or more SMPGs. Observe that each such SMPG $G^C$ is the union of some $G^{C^1}, G^{C^2}, \ldots, G^{C^k}$, where $C^i\in \mathbb{C}$ for $i=1,2,\ldots,k$; that is, $C=C^1\cup C^2\cup \ldots \cup C^k$. We call $G^C$ a \emph{family of cyclic cycles} of $\mathbb{C}$, and  $C$ a \emph{shell} of $G^{C_4}$. The second graph shown in Figure \ref{new1} is a cyclic cycle-type 4-base module,
       in which the shell is marked with dashed bold lines. This 4-base module has only one family of cyclic cycles for this 4-base module.
\end{itemize}

\subsection{Properties of 4-base modules}

Suppose that $G^{C_4}$ is a 4-chromatic SMPG. If no specified note, we use  $F_2(G^{C_4})\subset C_4^0(G^{C_4})$ to denote the set of 4-colorings of $G^{C_4}$ such that $C_4$ is colored with exact two colors, and let $C_4=v_1v_2v_3v_4v_1$.  Whennever $F_2(G^{C_4})$ is not empty,  we assume that $f(v_1)=f(v_3)=1$ and  $f(v_2)=f(v_4)=2$  for any 4-coloring $f\in F_2(G^{C_4})$.  Based on the agreement, we can define $P_{1i}^{f}(v_1,v_3)$ as the set of $1i$-path (under $f$) from $v_1$ to $v_3$, and $P_{2i}^{f}(v_2,v_4)$  the set of $2i$-path (under $f$) from $v_2$ to $v_4$, where $i\in \{3,4\}$. We refer to the paths in $P_{1i}^{f}(v_1,v_3)$ and $P_{2i}^{f}(v_2,v_4)$ for $i=3,4$ as \emph{$ji$-endpoint-paths of $f$}, where $j\in \{1,2\}$, and use  $\ell^{1i}$ and $\ell^{2i}$ to denote a path in $P_{1i}^{f}(v_1,v_3)$ and $P_{2i}^{f}(v_2,v_4)$, respectively. In particular, when $G^{C_4}$ is a 4-base module and $f$ is a module coloring of $G^{C_4}$,  these endpoint-paths are called \emph{module paths of $f$}.  Based on these notations, we have the following result.

\begin{theorem}\label{thm3.3}
Let $G^{C_4}$ be a 4-chromatic SMPG, $C_4=v_1v_2v_3v_4v_1$. If $F_2(G^{C_4})\neq \emptyset$, then for any $f\in F_2(G^{C_4})$, exact two of $P_{13}^{f}(v_1,v_3)$, $P_{14}^{f}(v_1,v_3)$, $P_{23}^{f}(v_2,v_4)$ and $P_{24}^{f}(v_2,v_4)$ are nonempty.
\end{theorem}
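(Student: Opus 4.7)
The plan is to pair the four sets as $\{P_{13}^{f}(v_1,v_3), P_{24}^{f}(v_2,v_4)\}$ and $\{P_{14}^{f}(v_1,v_3), P_{23}^{f}(v_2,v_4)\}$, observing that in each pair the two underlying color sets are disjoint subsets of $\{1,2,3,4\}$. I would show that each pair contains at most one nonempty element (via a Jordan-curve argument in the planar disk) and at least one nonempty element (via a Kempe-component ``link'' argument in the triangulation); summing yields exactly two nonempty sets overall.

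For the upper bound, suppose paths $P\in P_{13}^{f}(v_1,v_3)$ and $Q\in P_{24}^{f}(v_2,v_4)$ both exist. Since $f(V(P))\subseteq\{1,3\}$ and $f(V(Q))\subseteq\{2,4\}$, the paths are vertex-disjoint. In the planar embedding of $G^{C_4}$ in the closed disk bounded by $C_4$, however, $P$ is an arc from $v_1$ to $v_3$ that partitions the disk into two regions, one containing $v_2$ (the region whose boundary uses the arc $v_1v_2v_3$ of $C_4$) and one containing $v_4$; any curve from $v_2$ to $v_4$ in the disk must meet $P$, so the planar embedding of $Q$ must share a point with $P$. Since $P$ and $Q$ are vertex-disjoint and a planar embedding has no edge crossings, this is impossible. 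The identical argument, with $\{1,3\}$ replaced by $\{1,4\}$ and $\{2,4\}$ by $\{2,3\}$, rules out $P_{14}^{f}(v_1,v_3)$ and $P_{23}^{f}(v_2,v_4)$ both being nonempty.

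For the lower bound, suppose $P_{13}^{f}(v_1,v_3)=\emptyset$; I show $P_{24}^{f}(v_2,v_4)\neq\emptyset$. Let $A$ denote the $13$-component of $f$ containing $v_1$. Since $v_3\notin A$ and $f(v_2)=f(v_4)=2$, we have $A\cap V(C_4)=\{v_1\}$. If a vertex $w\notin A$ adjacent in $G^{C_4}$ to some vertex of $A$ had color $1$ or $3$, the connecting edge would place $w$ in the same $13$-component as $A$, namely $A$ itself; so every outside-neighbor of $A$ has color in $\{2,4\}$. Now use the triangulation hypothesis: the connected subgraph $A$ meets $C_4$ only at $v_1$, so in the planar embedding of $G^{C_4}$ in the closed disk bounded by $C_4$, the unique face of $A$ (viewed as an embedded plane subgraph) that contains the boundary arc $v_2v_3v_4$ is well defined, and its boundary in $A$ is a closed walk $W$ based at $v_1$. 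For each edge $e$ of $W$, the triangle of $G^{C_4}$ incident to $e$ on the side of this face has a third vertex in $N(A)\setminus A$; consecutive such third vertices along $W$ are adjacent in $G^{C_4}$ as the shared side of consecutive triangles, and at a corner vertex of $W$ one also inserts the intermediate outside-neighbors in cyclic order. This produces a walk $v_2=u_0,u_1,\dots,u_k=v_4$ in $G^{C_4}$ whose vertices all have colors in $\{2,4\}$; reducing to a simple path yields a member of $P_{24}^{f}(v_2,v_4)$. The symmetric argument applied to the $14$-component of $v_1$ (whose outside-neighbors must have color in $\{2,3\}$) shows $P_{14}^{f}(v_1,v_3)=\emptyset\Rightarrow P_{23}^{f}(v_2,v_4)\neq\emptyset$.

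Combining the upper and lower bounds yields exactly one nonempty set in each pair, and hence exactly two nonempty sets overall. The principal obstacle is the link step: one must carefully use the SMPG hypothesis (interior faces all triangular) and the single-point attachment $A\cap V(C_4)=\{v_1\}$ to verify that the outside-neighbor sequence along the selected face of $A$'s embedding actually forms a walk in $G^{C_4}$. Cycles of $A$ enclosing pockets of color-$2$ or color-$4$ vertices do not obstruct the argument, because the construction is restricted to the unique face of $A$'s embedding that meets the boundary arc $v_2v_3v_4$; that face has a single closed boundary walk, and the triangulating faces of $G^{C_4}$ incident to it yield the required $v_2$-to-$v_4$ walk.
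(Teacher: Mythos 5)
Your proof is correct and follows essentially the same route as the paper: the paper's entire argument is the one-line assertion that $P_{13}^{f}(v_1,v_3)=\emptyset$ iff $P_{24}^{f}(v_2,v_4)\neq\emptyset$ and $P_{14}^{f}(v_1,v_3)=\emptyset$ iff $P_{23}^{f}(v_2,v_4)\neq\emptyset$, which is exactly the pairing you use. You simply supply the standard Jordan-curve and Kempe-chain-boundary details that the paper leaves implicit.
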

\begin{proof}
The conclusion follows directly from the fact that $P_{13}^{f}(v_1,v_3)=\emptyset$ if and only if $P_{24}^{f}(v_2,v_4)\neq \emptyset$, and $P_{14}^{f}(v_1,v_3)=\emptyset$ if and only if $P_{23}^{f}(v_2,v_4)\neq \emptyset$. \qed
\end{proof}

According to Theorem \ref{thm3.3}, the 4-colorings within $F_2(G^{C_4})$ can be divided into two categories: \emph{cross-coloring} and \emph{shared-endpointcoloring}. Let $f\in F_2(G^{C_4})$. If $P^f_{1i}(v_1,v_3)\neq \emptyset$ and $P^f_{2i}(v_2,v_4)\neq \emptyset$ for some $i\in \{3,4\}$, then $f$ is deemed a \emph{cross-coloring} of $G^{C_4}$. Please refer to Figure \ref{fig3-2} (a) for an example of \emph{cross-colorings}. Additionally, if $P^f_{1i}(v_1,v_3)=\emptyset$ for every $i\in \{3,4\}$ or $P^f_{1i}(v_1,v_3)\neq\emptyset$ for every $i\in \{3,4\}$, then $f$ is called a \emph{shared-endpoint coloring} of $G^{C_4}$ on $\{v_2,v_4\}$ (when $P^f_{1i}(v_1,v_3)=\emptyset, i=3,4$) or a \emph{shared-endpoint coloring} of $G^{C_4}$ on $\{v_1,v_3\}$ (when $P^f_{1i}(v_1,v_3)\neq \emptyset, i=3,4$), respectively. For an example of \emph{shared-endpoint colorings} on $\{v_2,v_4\}$, please refer to Figure \ref{fig3-2} (b).

 \begin{figure}[H]
  \centering
 \includegraphics[width=5.5cm]{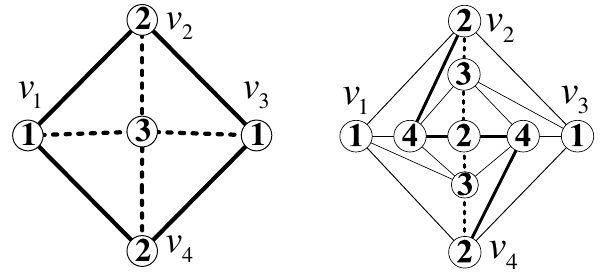}\\
  (a) $f$ \hspace{2cm} (b)  $f_1$
  \caption {Illustration for  cross-colorings and  shared-endpoint colorings}\label{fig3-2}
\end{figure}

\begin{theorem} \label{thm3.4}
Let  $G^{C_4}$ be a 4-chromatic SMPG such that $F_2(G^{C_4})\neq \emptyset$, where $C_4=v_1v_2v_3v_4v_1$. Then, $G^{C_4}$ is a 4-base module if and only if there exists a $f_0\in F_2(G^{C_4})$ such that  $F_2^{f_0}(G^{C_4})$ contains only shared-endpoint colorings on either $\{v_2,v_4\}$ or $\{v_1,v_3\}$, where $F_2^{f_0}(G^{C_4}) \subseteq F^{f_0}(G^{C_4})$ is the set of colorings in $F^{f_0}(G^{C_4})$ such that $C_4$ are colored with two colors.
\end{theorem}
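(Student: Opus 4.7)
My plan is to prove both directions via Theorem \ref{thm3.3} applied separately inside each of the two SMPGs glued along $C_4$, combined with the characterisation of UB-cycles in Theorem \ref{thm2.1} and the planar separation that $C_4$ imposes on $G=G^{C_4}\cup G_1^{C_4}$.

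\emph{Sufficiency.} Suppose $f_0\in F_2(G^{C_4})$ is given whose Kempe class $F_2^{f_0}(G^{C_4})$ consists only of shared-endpoint colorings on, say, $\{v_2,v_4\}$. I would exhibit a complementary SMPG $G_1^{C_4}$ carrying a module coloring $g_0$ that is shared-endpoint on the opposite pair $\{v_1,v_3\}$; the identity module $B^4$ already suffices, and more generally a mirror-copy of $G^{C_4}$ with the colour roles of $\{1,2\}$ and $\{3,4\}$ permuted works. Gluing $f_0$ and $g_0$ along $C_4$ yields a 4-coloring $f$ of $G=G^{C_4}\cup G_1^{C_4}$ with $f(C_4)=\{1,2\}$. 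The UB condition is then checked through Theorem \ref{thm2.1}: any bichromatic cycle $C'\neq C_4$ of $f$ must lie entirely on one side of $C_4$ (so is nonintersecting with $C_4$), because the shared-endpoint structure on each side already supplies the needed $\{i,j\}$-paths between opposite $C_4$-vertices on the matching side, and planarity forbids any bichromatic cycle from weaving across $C_4$. Propagation along the Kempe class $F^f(G)$ follows from the invariance of the shared-endpoint property on each side.

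\emph{Necessity.} Choose any $G_1^{C_4}$ for which $G=G^{C_4}\cup G_1^{C_4}$ is a UBCMPG, and let $f$ be a UBC-coloring of $G$ in the standard labelling $f(v_1)=f(v_3)=1$, $f(v_2)=f(v_4)=2$. Let $f_0$, $g_0$ be the restrictions to $G^{C_4}$, $G_1^{C_4}$. Applying Theorem \ref{thm3.3} to $g_0$ in $G_1^{C_4}$ splits the argument into three subcases: $g_0$ is shared-endpoint on $\{v_1,v_3\}$, shared-endpoint on $\{v_2,v_4\}$, or a cross-coloring. In the two shared-endpoint subcases the UB condition forces the pair of endpoint-path sets missing from $G_1^{C_4}$ to be supplied by $G^{C_4}$, which is exactly the shared-endpoint property of $f_0$ on the complementary pair. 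In the cross subcase, I would perform a Kempe change in $G$ that swaps the $\{i,j\}$-component spanning $C_4$ through $G_1^{C_4}$, producing a Kempe-equivalent UBC-coloring of $G$ whose restriction to $G_1^{C_4}$ is now shared-endpoint, and then reduce to the first two subcases.

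To upgrade ``$f_0$ is shared-endpoint'' to ``every member of $F_2^{f_0}(G^{C_4})$ is shared-endpoint on the same pair'', I would simulate each Kempe change in $G^{C_4}$ by a matching change in $G$: a component of $G_{ij}^{f_0}$ avoiding $C_4$ is already a component of $G_{ij}^{f}$, while a component meeting $C_4$ merges through the relevant $C_4$-vertex with a component of $G_1^{C_4}$ in the colors $\{i,j\}$, and swapping the merged component in $G$ induces precisely the desired local change on $G^{C_4}$. The resulting $f'\in F^f(G)$ remains UBC, so the shared-endpoint analysis above applied to $f'$ forces the restriction $f_0'$ to be shared-endpoint on the same pair as $f_0$.

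\emph{Main obstacle.} The technical heart lies in the cross-coloring subcase and in the faithful simulation of Kempe changes across the boundary $C_4$: one must track precisely which $\{i,j\}$-components of $G^{C_4}$ merge with $\{i,j\}$-components of $G_1^{C_4}$ through $C_4$, and check that the shared-endpoint structure in $G_1^{C_4}$ is preserved (or appropriately restored) along every step of the Kempe walk. The planarity-based bookkeeping of endpoint-paths enabled by Theorem \ref{thm3.3} is the main tool throughout.
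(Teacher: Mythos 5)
Your proposal follows essentially the same route as the paper: sufficiency by gluing $G^{C_4}$ to the identity module $B^4$ and checking via Theorem \ref{thm2.1} that no bichromatic cycle can cross $C_4$ (the matching endpoint-path is always missing on one side, by Theorem \ref{thm3.3}), and necessity by showing that a cross-coloring, or shared-endpoint colorings on both pairs within a single Kempe class, would combine with the endpoint-paths of the complementary module to create a bichromatic cycle intersecting $C_4$. The only substantive difference is bookkeeping---you case on the restriction to $G_1^{C_4}$, whereas the paper quantifies over $F_2(G^{C_4})$ and glues each candidate with a fixed $f_1$ or $\sigma(f_1,C_4)$---and the ``main obstacle'' you flag (transporting Kempe changes across the boundary $C_4$) is precisely the step the paper's own proof also leaves implicit.
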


\begin{proof}
(Necessity) Suppose that $G^{C_4}$ is a 4-base module, and let $G_1^{C_4}$ be an arbitrary SMPG with respect to $C_4$ such that $G_1^{C_4}\cap G^{C_4}=C_4$ and $G=G_1^{C_4}\cup G^{C_4}$ is a UBCMPG with respect to $C_4$. Clearly, $F_2(G_1^{C_4})\neq \emptyset$, i.e., there exists a coloring $f_1\in F_2(G_1^{C_4})$ such that $|f_1(C_4)|=2$, where  $f_1(v_1)=f_1(v_3)=1$ and $f_1(v_2)=f_1(v_4)=2$. If for any $f'\in F_2(G^{C_4})$,  $F_2^{f'}(G^{C_4})$ contains a cross-coloring $f''$, then $f''\cup f_1$ or $f''\cup \sigma(f_1, C_4)$ contains a bichromatic cycle of $G$ that is intersect with $C_4$. By Theorem \ref{thm2.1}, $G$ is not a UBCMPG, a contradiction. Therefore, there exists a $f_0\in F_2(G^{C_4})$ such that  $F_2^{f_0}(G^{C_4})$ contains no cross-coloring, i.e., $F_2^{f_0}(G^{C_4})$ contains only   shared-endpoint colorings on $\{v_2,v_4\}$ or $\{v_1,v_3\}$ (by Theorem \ref{thm3.3}). Suppose that there are two colorings $f, f^c\in F_2^{f_0}(G^{C_4})$ such that $f$ is a shared-endpoint coloring  on $\{v_2,v_4\}$ and $f^c$ is a shared-endpoint coloring on $\{v_1,v_3\}$ (see Figure \ref{fig3-3} for an illustration of this case, where $f$ and $f^c$ are complement colorings with respective to $C_{34}$). Analogously, we have that $f_1 \cup f$ or $f_1 \cup f^c$ contains a bichromatic cycle of $G$ that intersects with $C_4$, and also a contradiction.

\begin{figure}[H]
  \centering
 \includegraphics[width=5.5cm]{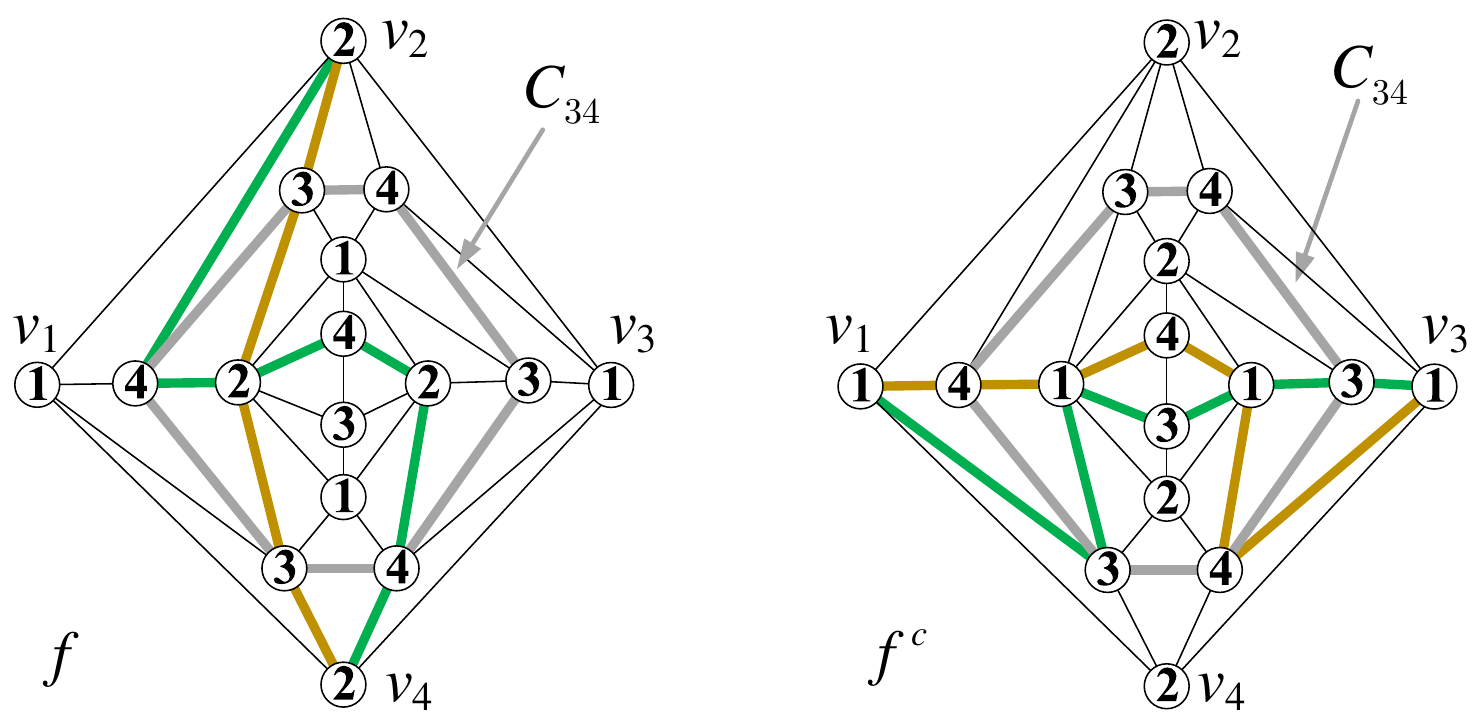}\\
  \caption {An illustration for  the proof of Theorem \ref{thm3.4} (I)}\label{fig3-3}
\end{figure}
\vspace{-1cm}
\begin{figure}[H]
  \centering
 \includegraphics[width=5.5cm]{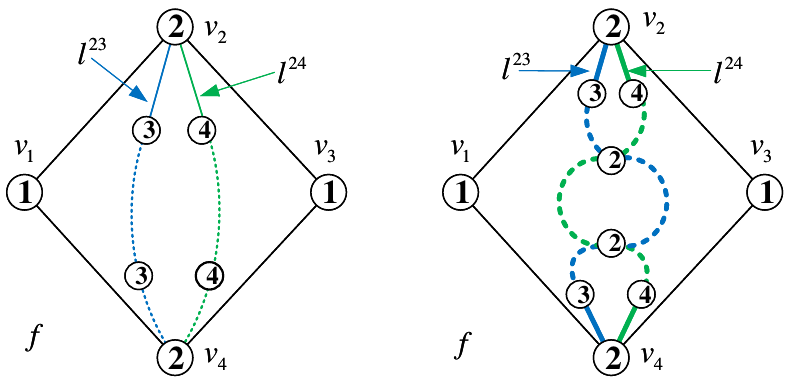}\hspace{1cm}
  \includegraphics[width=2.5cm]{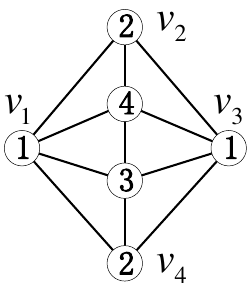}\\
  \hspace{0.5cm}(a) A diagram of shared-endpoint-coloring \hspace{1.5cm} (b)a 4-coloring $g$ of $B^4$
  \caption {An illustration for  the proof of Theorem \ref{thm3.4} (II)}\label{fig3-4}
\end{figure}

(Sufficiency) 
Suppose that $f_0\in F_2(G^{C_4})$ such that $F^{f_0}(G^{C_4})$ contains only shared-endpoint colorings on $\{v_2,v_4\}$ (see Figure \ref{fig3-4} (a) for an illustration, where two cases are considered: the first graph is \emph{parallel type}, i.e., 23-endpoint-paths and 24-endpoint-paths are internally disjoint; the second graph is \emph{intersecting type}, i.e. a 23-endpoint-path intersect with a 24-endpoint-paths).  Let $G=G^{C_4}\cup B^4$. We extend $f_0$ to $G$, and the resulting coloring is also denoted by $f_0$. Let  $f'_0$ be the restricted coloring of $f_0$ to $B^4$. Then, $f'_0$ is either the coloring shown in Figure \ref{fig3-4} (b) or its complement coloring with respect to $C_4$; without loss of generality, we assume that  $f'_0$  is the former. For any  $g\in F^{f_0}(G)$, we denote by $g'$ the restricted coloring of $g$ to $G^{C_4}$. 
If $C_4$ is not a bichromatic cycle of $g$, then there must exist a coloring $g'' \in (F^{f_0}(G) \setminus \{g\})$ such that  $C_4$ is a bichromatic cycle of $g''$ and $g''$ contains a bichromatic cycle that intersects with $C_4$. This implies that the restricted coloring of $g''$ to $G^{C_4}$ belongs to $F_2^{f_0}(G^{C_4})$ but is a shared-endpoint coloring on $\{v_1,v_3\}$, a contradiction. Therefore, $|g(C_4)|=2$ and $g' \in F_2^{f_0}(G^{C_4})$.  By the assumption,  $g'$ contains only 23-endpoint-path and 24-endpoint-path. However, $B^4$ contains neither 23-endpoint-path nor 24-endpoint-path. This indicates that $g$ contains no $2i$-cycle $(i=3,4)$ that intersects with $C_4$ in $G$. Additionally, $f'_0$ contains both 13-endpoint-path and 14-endpoint-path in $B^4$ while $g'$ contains no 13-endpoint-path or 14-endpoint-path  in $G^{C_4}$. Therefore, $g$ contains no $1i$-cycle  $(i=3,4)$ that intersects with $C_4$  in $G$. By Theorem \ref{thm2.1}, $G$ is a UBCMPG  with respect to $C_4$. \qed
 \end{proof}

{\bf Remark 5}
For a given 4-base module $G^{C_4}$ with respect to $C_4$, let $f$ be
a module coloring of $G^{C_4}$. In the following, if no specified note, we always assume that a module path of $f$ is a 23-module path or a 24-module path.

According to Theorem \ref{thm3.4}, we have the following corollary.

\begin{corollary}\label{cor3.3}
Suppose that $G^{C_4}$ is a tree-type 4-base module. Then, there exists a coloring  $f_0\in F_2(G^{C_4})$ such that $F^{f_0}(G^{C_4})=\{f_0\}$ and $f_0$ contains a unique 23-module path and a unique 24-module path.
\end{corollary}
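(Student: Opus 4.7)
The plan is to combine Theorem~\ref{thm3.4} with the tree-type defining property. By the definition of a tree-type 4-base module, there is a module coloring $f$ of $G^{C_4}$ whose only bichromatic cycle is $C_4$, so $f\in F_2(G^{C_4})$. Theorem~\ref{thm3.4} then yields some $f_0\in F_2(G^{C_4})$ for which $F_2^{f_0}(G^{C_4})$ consists only of shared-endpoint colorings on one of $\{v_1,v_3\}$ or $\{v_2,v_4\}$. I would select $f_0$ so that it is simultaneously a tree-type module coloring (so that $C_4$ is also its unique bichromatic cycle), and, without loss of generality, take the shared-endpoint side to be $\{v_2,v_4\}$.

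With this $f_0$, the uniqueness of the 23- and 24-module paths is immediate. Since $f_0$ is shared-endpoint on $\{v_2,v_4\}$, there is at least one 23-path and one 24-path from $v_2$ to $v_4$. Since $C_4$ is the unique bichromatic cycle of $f_0$, the subgraphs $G_{23}^{f_0}$ and $G_{24}^{f_0}$ contain no cycles (any such cycle would be a second bichromatic cycle distinct from $C_4$). Both subgraphs are therefore forests, and the $v_2$-$v_4$ path inside each of their components is unique.

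The main obstacle is the equality $F^{f_0}(G^{C_4})=\{f_0\}$, which essentially amounts to showing $\omega(G_{ij}^{f_0})=1$ for every pair $\{i,j\}$. I would proceed by contradiction: if some $\omega(G_{ij}^{f_0})\geq 2$, then a Kempe change on one $ij$-component $H$ produces $g\neq f_0$ in $F^{f_0}(G^{C_4})$. Split into cases on $\{i,j\}$ and on whether $H$ meets $C_4$: when $\{i,j\}=\{1,2\}$, the fact that $C_4$ is the only cycle of $G_{12}^{f_0}$ forces $H$ to contain all of $V_1\cup V_2$, reducing the swap to a trivial color permutation; when $\{i,j\}=\{3,4\}$, the swap preserves $|g(C_4)|=2$, so $g\in F_2^{f_0}$ and, by Theorem~\ref{thm3.4}, must be shared-endpoint on $\{v_2,v_4\}$ — but the partial $3/4$ swap relocates color-3 or color-4 vertices off the unique 23- or 24-module path, producing a cross-coloring and contradicting Theorem~\ref{thm3.4}; when $\{i,j\}$ mixes a boundary color with an interior color, the swap either changes $|g(C_4)|$ (so that the sequence of K-changes must later re-enter $F_2$ through a coloring that is forced to be a cross-coloring) or breaks the uniqueness of the module path on the relevant diagonal, again contradicting the shared-endpoint restriction.

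The delicate step is tracing how intermediate colorings along a K-change sequence interact with both the unique bichromatic cycle $C_4$ and the rigid shared-endpoint structure guaranteed by Theorem~\ref{thm3.4}; the forest argument of the second paragraph is the main tool that converts any nontrivial K-change into either a spurious second bichromatic cycle or a cross-coloring in $F_2^{f_0}$.
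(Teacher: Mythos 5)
Your first two paragraphs are fine as far as they go: the reduction of the uniqueness of the 23- and 24-module paths to the acyclicity of $G_{23}^{f_0}$ and $G_{24}^{f_0}$ is exactly the right observation, and it is essentially all that Theorem~\ref{thm3.4} plus the tree-type hypothesis can be expected to deliver (the paper itself gives no argument beyond citing Theorem~\ref{thm3.4}). However, there are two genuine gaps. First, you \emph{assert} that $f_0$ can be chosen so that it is simultaneously the coloring produced by Theorem~\ref{thm3.4} (shared-endpoint on one diagonal) and a tree-type witness (having $C_4$ as its unique bichromatic cycle). These are two existence statements about possibly different colorings in $F_2(G^{C_4})$, and nothing in your argument shows a single coloring satisfies both; the forest argument in your second paragraph collapses without this.

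Second, and more seriously, your strategy for $F^{f_0}(G^{C_4})=\{f_0\}$ does not work. As you correctly note, this equality forces every $G_{ij}^{f_0}$ to be connected, so you must rule out \emph{every} nontrivial $K$-change, including those on an $ij$-component meeting $C_4$ in only some of its vertices. But in your mixed-pair case you concede that such a swap may produce a coloring $g$ with $|g(C_4)|=3$, and then argue about how a subsequent sequence of $K$-changes would ``re-enter'' $F_2$. This misses the point: that $g$ itself already belongs to $F^{f_0}(G^{C_4})\setminus\{f_0\}$ and refutes the desired equality on the spot; no re-entry into $F_2$ is needed. To see that the connectivity you need is not a free consequence of the tree-type hypothesis, consider the identity module $B^4$ with its module coloring ($v_1,v_3$ colored $1$, $v_2,v_4$ colored $2$, the two interior vertices colored $3$ and $4$): $C_4$ is its unique bichromatic cycle, yet the $13$-subgraph is disconnected (one of $v_1,v_3$ is isolated in it), and the $K$-change on that singleton component yields a genuinely new coloring. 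So either the statement must be read as constraining only $F_2^{f_0}(G^{C_4})$, or additional hypotheses are being used implicitly; in either reading, your case analysis as written does not close the argument, and the ``delicate step'' you flag at the end is precisely the part that remains unproved.
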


\vspace{-0.5cm}

\begin{figure}[H]
  \centering
 \includegraphics[width=10cm]{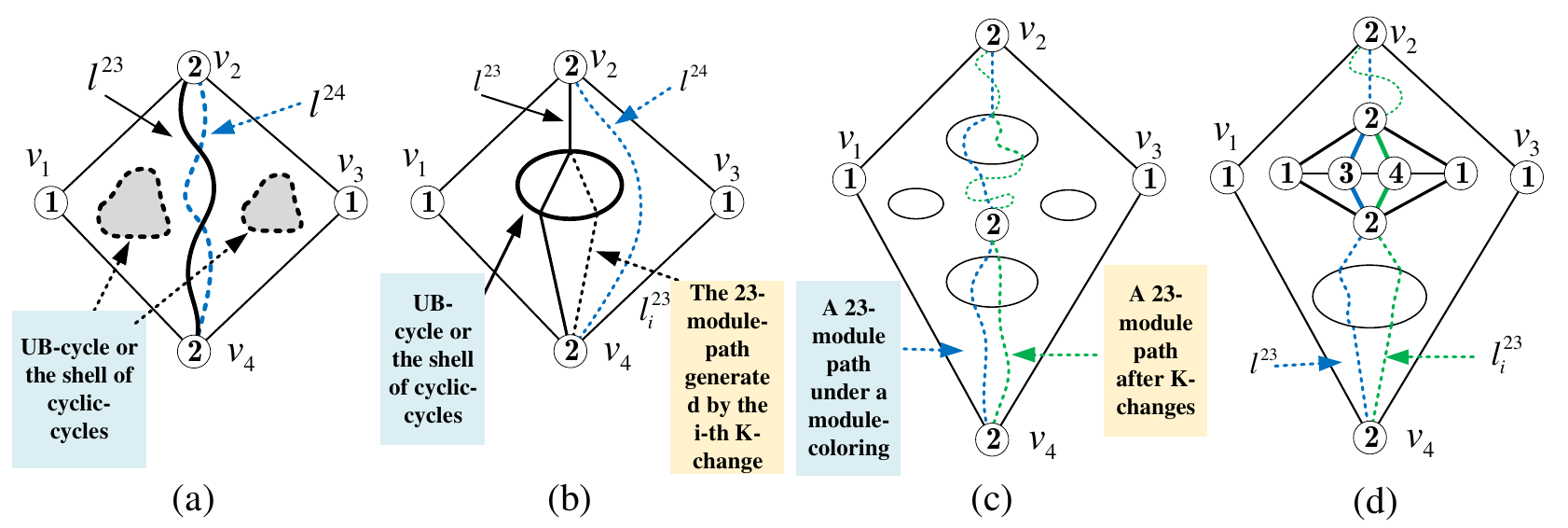}\\
  \caption {The structures and classes of cycle-type and cyclic cycle-type 4-base modules. (a)  SMP-type 4-base module, (b) one UB-cycle or one family of cyclic cycles module path, (c) more than one UB-cycle or one family of cyclic cycles module paths, (d) an example of UB-cycle}\label{fig3-5}
\end{figure}

Cycle-type 4-base modules and cyclic cycle-type 4-base modules can further be divided into two classes in terms of module paths: \emph{single module path type} (SMP-type) and \emph{multiple module path type} (MMP-type).

Let $G^{C_4}$ be a cycle-type or cyclic cycle-type 4-base module and $f_0$ be a module coloring of  $G^{C_4}$. If all colorings in $F^{f_0}(G^{C_4})$ have the same 23-module path and 24-module path, denoted by $\ell^{23}$ and $\ell^{24}$, then $G^{C_4}$ is called an SMP-type 4-base module. Clearly, if $G^{C_4}$  is an SMP-type 4-base module, then  no vertex belonging to $\ell^{23}$ or $\ell^{24}$ is in the interior of UB-cycles or cyclic-cycles of $G^{C_4}$; see Figure \ref{fig3-5} (a) for an illustration.

Let $G^{C_4}$ be a cycle-type or cyclic cycle-type 4-base module and $f_0$ be a module coloring of  $G^{C_4}$. $G^{C_4}$ is called an MMP-type 4-base module if the following conditions hold: (1) any two distinct colorings $f_1,f_2\in F_2^{f_0}(G^{C_4})$ only contain module paths $\ell^{23}$ and $\ell^{24}$; (2) $f_2$ is obtained from $f_1$ by conducting one or more $\sigma$-operations, where the UB-cycle or the families of cyclic cycles must contain vertices on  $\ell^{23}$ or $\ell^{24}$ such that the resulting coloring by conducting a $\sigma$-operation is still a module coloring with module paths $\ell^{23}$ and $\ell^{24}$. Figure \ref{fig3-5}(b) depicts the case that there is only one UB-cycle or one family of cyclic cycles;  Figure \ref{fig3-5}(c) presents the case that there are multiple UB-cycles or multiple families of cyclic cycles;  Figure \ref{fig3-5}(d) gives an example of UB-cycle based on Figure \ref{fig3-5}(c).

\section{Contracting and Extending System}
\label{sec:ecsystem}

In \cite{r18}, the author introduced the contracting and extending system (CE-system) $<K_4; \Phi=\{\zeta^+_2,\zeta^+_3,\zeta^+_4,\zeta^+_5, \zeta^-_2,\zeta^-_3,\zeta^-_4,\zeta^-_5\}>$, in which $K_4$ is the starting graph, and $\zeta^+_i$ and $\zeta^-_i$ are a pair of operators (called contracting and extending $i$-wheel operation, respectively), $i=2,3,4,5$. This system builds a connection between colorings and structures when constructing MPGs. Here is a description of these operators, which we will use to prove our subsequent conclusions.

 \begin{figure}[H]
  \centering
  \includegraphics[width=6.5cm]{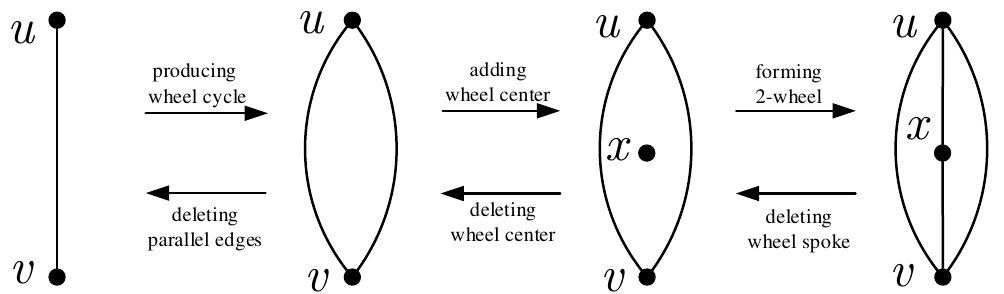} \hspace{0.2cm}
   \includegraphics[width=4cm]{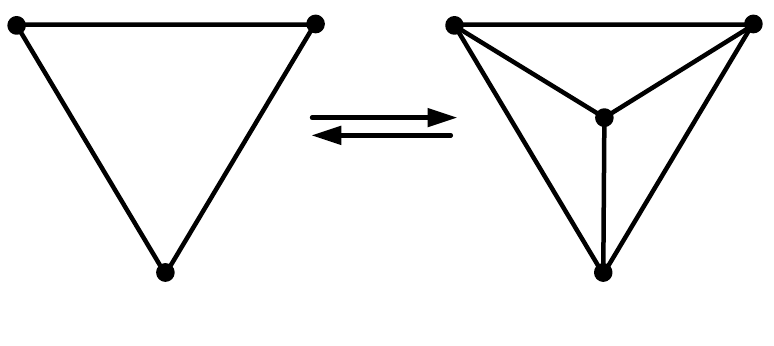}\\
   \hspace{2.3cm}(a)\hspace{6cm} (b)
  \caption {(a) E2WO and C2WO ~~(b) E3WO and C3WO }\label{23-wheel}
\end{figure}

\emph{The extending $2$-wheel operator} (E2WO):  To create a 2-wheel, start by adding a new edge between two adjacent vertices $u,v$, resulting in two parallel edges between $u$ and $v$. Then, add a new vertex $x$  on the face bounded by these parallel edges and connect it to  $u$ and $v$, forming a 2-wheel. This process is illustrated in Figure \ref{23-wheel}(a). \emph{The contracting $2$-wheel operator} (C2WO): To contract a 2-wheel $x$-$uvu$, remove its center vertex  $x$ and the edges $xu$ and $xv$, along with one parallel edge $uv$. We refer to these operations as $\zeta^+_2$ and $\zeta^-_2$, respectively.

\emph{The extending $3$-wheel operator} (E3WO), denoted by $\zeta^+_3$, refers to the transformation from triangle to a 3-wheel, and its inverse process is \emph{the contracting $3$-wheel operator} (C3WO), denoted by $\zeta^-_3$; see Figure \ref{23-wheel}(b).

\begin{figure}[H]
  \centering
  \includegraphics[width=11.5cm]{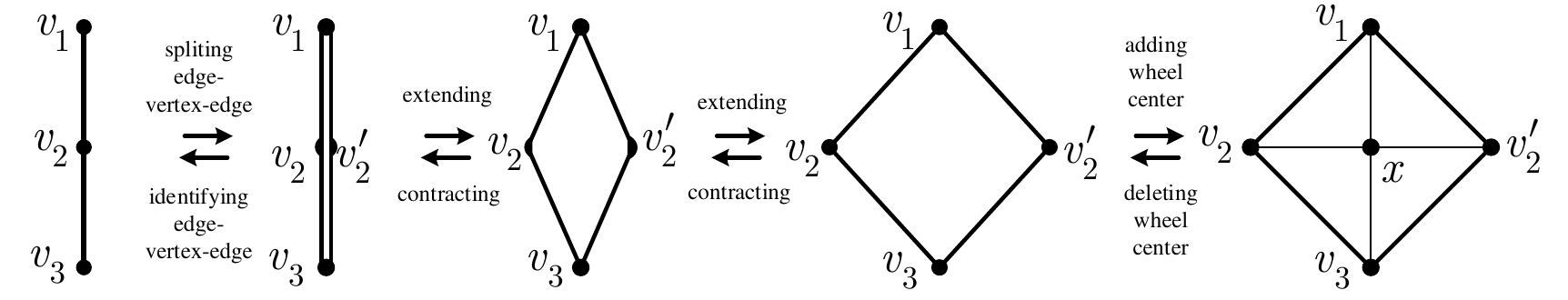}
  \caption {E4WO and C4WO}\label{4-wheel}
\end{figure}

\emph{The extending $4$-wheel operator} (E4WO), denoted by $\zeta^+_4$, refers to the transformation from a path $P_3=v_1v_2v_3$ of length 2 to a 4-wheel, and its inverse is \emph{the contracting $4$-wheel operator} (C4WO), denoted by $\zeta^-_4$; see Figure \ref{4-wheel}.  During the $\zeta^+_4$ process, edge $v_1v_2$, vertex $v_2$, and edge $v_2v_3$ are split into $v_1v_2$ and $v_1v'_2$, $v_2$ and $v'_2$, and $v_2v_3$ and $v'_2v_3$, respectively. Also, edges that are incident with $v_2$ and lie on the left side of $P_3$ (in the original graph) are still incident with $v_2$, while edges that are incident with $v_2$ and lie on the right side (in the original graph) of $P_3$ are now incident with $v'_2$. On the other hand, during the $\zeta^-_4$ process, $v_2$ and $v'_2$ are identified into a new vertex that is incident with all edges that were previously incident with $v_2$ and $v'_2$ in the original graph. $v_2$ and $v'_2$ are referred to as  \emph{contracted vertices} of $\zeta^-_4$.

\begin{figure}[H]
  \centering
  \includegraphics[width=12.5cm]{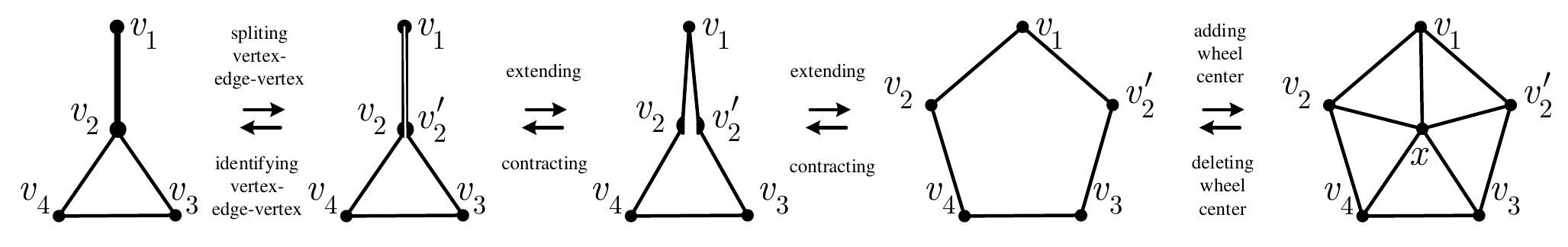}
  \caption {E5WO and C5WO}\label{5-wheel}
\end{figure}

\emph{The extending $5$-wheel operator} (E5WO), denoted by $\zeta^+_5$, transforms a funnel (the first graph from the left in Figure \ref{5-wheel}) into a 5-wheel. Its inverse process is the contracting 5-wheel operator (C5WO), denoted by $\zeta^-_5$ (see Figure \ref{5-wheel}). During the $\zeta^+_5$ process, vertex $v_2$ is split into $v_2$ and $v'_2$, edge $v_1v_2$ into $v_1v_2$ and $v_1v'_2$, and edges that are incident with $v_2$ on the left side of path $v_1v_2v_4$ (in the original graph) become incident with $v_2$, while edges that are incident with $v_2$ on the right side of path $v_1v_2v_3$  become incident with $v'_2$. On the other hand, during the $\zeta^-_5$ process, $v_2$ and $v'_2$ are identified into a new vertex incident with all edges that were previously incident with $v_2$ and $v'_2$ in the original graph.   $v_2$ and $v'_2$ are called \emph{contracted vertices} of $\zeta^-_5$ process.

{\bf Remark 6}. Throughout this paper, we assume that E$i$WO and C$i$WO ($i=2,3,4,5$) are implemented with a given 4-coloring. In the case where $i=4$, the two vertices of the 4-wheel that have been assigned the same color are designated as contracted vertices. Furthermore, if the object of E$i$WO (i=2,3,4,5) is colored with at most three colors based on the 4-coloring, we color the center of the newly formed wheel  (the newly added vertex)  with a color that has not been assigned to the vertices of the wheel cycle, while keeping the colors of other vertices unchanged. Specifically, when $i\in \{4,5\}$, we color $v'_2$ with the color assigned to $v_2$.

\section{Transformation from FCC to The Decycle Problem} \label{sec5}

In this section, we present a method for transforming FCC into the decycle problem, utilizing 4-base modules and a CE-system. We first introduce some fundamental theories of decycle colorings before proceeding to transform FCC into the decycle problem of 4-base modules.

\subsection{Basic theories of decycle colorings}
Let $G^{C_4}$ be a 4-base module and $f$ a module coloring of $G^{C_4}$, where $C_4=v_1v_2v_3v_4v_1$,  $f(v_1)=f(v_3)=1$, $f(v_2)=f(v_4)=2$, and $P_{2i}^f(v_2,v_4)\neq \emptyset$ for $i=3,4$. If there exists a 4-coloring $f^*\in C_4^0(G^{C_4})$ such that $f^*(v_2)\neq f^*(v_4)$, then we call $f^*$ a \emph{decycle coloring} of $G^{C_4}$. If $G^{C_4}$ contains a decycle coloring, then $G^{C_4}$ is said to be  \emph{decyclizable}. As shown in Figure \ref{revised-4-1}, the first two graphs show the diagram of decycle coloring, and the last two graphs describe a module coloring and a decycle coloring of $B^4$. 

\vspace{-0.5cm}
\begin{figure}[H]
  \centering
 \includegraphics[width=8.5cm]{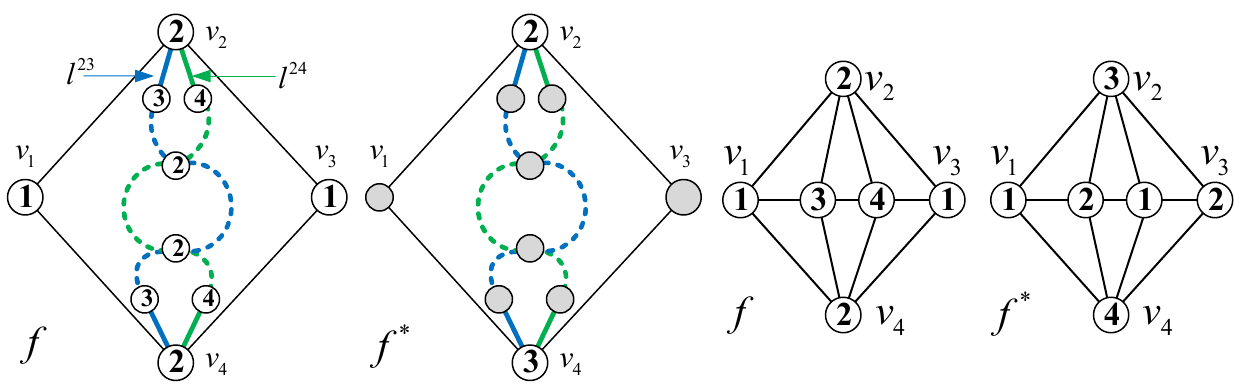}\\
  \caption {Illustration of the decycle-coloring}\label{revised-4-1}
\end{figure}

\begin{theorem}\label{thm4-3}
Let $G^{C_4}$ be a 4-base module and $f$ a module coloring, where $C_4=v_1v_2v_3v_4v_1$. If $d_{G^{C_4}}(v_2)=4$ and there are parallel 23-module path $\ell^{23}\in P^f_{23}(v_2,v_4)$ and 24-module path $\ell^{24}\in  P^f_{24}(v_2,v_4)$, then  $G^{C_4}$ is decyclizable, where $f(v_2)=f(v_4)=2$ and $f(v_1)=f(v_3)=1$.
\end{theorem}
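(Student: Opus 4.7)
The plan is to construct a decycle coloring $f^*$ of $G^{C_4}$ explicitly, building it to use all four colors on $C_4$ so that $f^*(v_2) \neq f^*(v_4)$, and then extending consistently to the interior of $G^{C_4}$. First I would exploit the degree hypothesis to pin down the local picture at $v_2$. Since $d_{G^{C_4}}(v_2) = 4$ and $v_2$ lies on the outer cycle $C_4$, the four neighbors of $v_2$ appear in the cyclic order of the planar embedding as $v_1, u_3, u_4, v_3$, where $u_3$ is the first interior vertex of $\ell^{23}$ and $u_4$ is the first interior vertex of $\ell^{24}$. The triangular faces incident to $v_2$ inside $G^{C_4}$ then force the edges $v_1u_3$, $u_3u_4$, $u_4v_3 \in E(G^{C_4})$, and under $f$ the colors of these four neighbors are $1, 3, 4, 1$. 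In particular, these colors already cover $\{1,3,4\}$, so $v_2$ cannot be recolored in a purely local fashion.

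The candidate for $f^*$ is obtained by altering $f$ only on a small set of vertices: set $f^*(v_1) = 1$, $f^*(v_2) = 2$, $f^*(v_3) = 3$, $f^*(v_4) = 4$ on the outer cycle, keep $f^*(u_3) = 3$, and set $f^*(u_4) = 1$. Along the module path $\ell^{23}$ from $u_3$ to $v_4$, the internal vertices are allowed to keep their original colors from $f$: since only $v_4$ has been recolored (from $2$ to $4$), and the penultimate vertex of $\ell^{23}$ has color $3 \neq 4$, no conflict is created at the $\ell^{23}$ side. Along $\ell^{24}$ from $u_4$ to $v_4$, the penultimate vertex (originally colored $4$) must also be recolored to $1$ to avoid adjacency with $v_4 = 4$; the rest of $\ell^{24}$ keeps its colors from $f$. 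The parallel-module-path hypothesis is crucial here, because the internal disjointness of $\ell^{23}$ and $\ell^{24}$ prevents the recolorings on one path from propagating into the other.

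The main obstacle, and the technical heart of the proof, is to argue that the modifications can always be absorbed into a valid 4-coloring on the three interior regions $R_1$ (on the $v_1$-side of $\ell^{23}$), $R_{2a}$ (between $\ell^{23}$ and $\ell^{24}$), and $R_{2b}$ (on the $v_3$-side of $\ell^{24}$). Since only the colors of $v_3$, $v_4$, $u_4$, and the penultimate vertex on $\ell^{24}$ have changed, one must verify that no vertex adjacent to these in the three regions is forced into a conflict under $f^*$. This is where the shared-endpoint-coloring property of $f$ on $\{v_2, v_4\}$ guaranteed by Theorem~\ref{thm3.4} enters: the absence of $13$- and $14$-endpoint paths under $f$ restricts the way $R_{2b}$ can carry colors $3$ and $4$ near $v_3, v_4$, and together with the parallel-paths condition this forbids precisely the adjacencies that would obstruct the extension. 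If the verification reveals residual local conflicts on $R_{2b}$, a short cascade of adjustments along $\ell^{24}$ (alternately swapping colors $1$ and $4$) resolves them, terminating because the length of $\ell^{24}$ is finite. Once the extension is completed, $f^*$ is a valid 4-coloring of $G^{C_4}$ with $f^*(v_2) = 2 \neq 4 = f^*(v_4)$, witnessing that $G^{C_4}$ is decyclizable.
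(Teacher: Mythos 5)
Your local set-up is right (with $d_{G^{C_4}}(v_2)=4$ the neighbourhood of $v_2$ induces the path $v_1u_3u_4v_3$ coloured $1,3,4,1$), but the construction that follows has a genuine gap at exactly the point you yourself flag as ``the technical heart of the proof.'' You recolour $v_3$ from $1$ to $3$, $v_4$ from $2$ to $4$, $u_4$ from $4$ to $1$, and the penultimate vertex of $\ell^{24}$ to $1$, and then assert that any residual conflicts can be absorbed into the regions $R_1$, $R_{2a}$, $R_{2b}$ by ``a short cascade of adjustments along $\ell^{24}$.'' That is not an argument. Nothing bounds $d(v_4)$ or $d(v_3)$: $v_4$ may have many neighbours coloured $4$ besides its neighbour on $\ell^{24}$, $v_3$ may have many neighbours coloured $3$, and $u_4$ recoloured to $1$ may clash with any number of its interior neighbours coloured $1$. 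Each forced repair spawns new potential conflicts off the path into the regions, and you supply no invariant showing the cascade terminates in a proper colouring rather than wrapping around. Uncontrolled recolouring cascades are precisely what Kempe chains exist to control, and without that control the extension step is unproved.

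The missing idea --- and the paper's actual proof --- is shorter and avoids the extension problem entirely by keeping the recolouring global. Since both $P^f_{23}(v_2,v_4)$ and $P^f_{24}(v_2,v_4)$ are nonempty, Theorem~\ref{thm3.3} forces $P^f_{13}(v_1,v_3)=P^f_{14}(v_1,v_3)=\emptyset$, so $v_1$ and $v_3$ lie in distinct $13$-components and in distinct $14$-components of $f$. Perform a $K$-change on the $14$-component containing $v_1$ and a $K$-change on the $13$-component containing $v_3$. Because $u_3$ (coloured $3$) is adjacent to $v_1$ it lies in $v_1$'s $13$-component and is untouched by the second swap; because $u_4$ (coloured $4$) is adjacent to $v_3$ it lies in $v_3$'s $14$-component and is untouched by the first. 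Hence $N(v_2)$ becomes the $34$-path $v_1u_3u_4v_3$ coloured $4,3,4,3$, no colour-$2$ vertex is affected (so $v_4$ keeps colour $2$), and recolouring $v_2$ from $2$ to $1$ yields a proper $4$-colouring with $f^*(v_2)\neq f^*(v_4)$. Note the structural hint you missed: the hypothesis $d_{G^{C_4}}(v_2)=4$ is there so that $v_2$ itself can be freed and recoloured, whereas your target colouring leaves $v_2$ fixed and instead moves $v_4$, the unconstrained high-degree endpoint, which is what makes your extension step intractable.
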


\vspace{-0.5cm}
\begin{figure}[H]
  \centering
  \includegraphics[width=8.5cm]{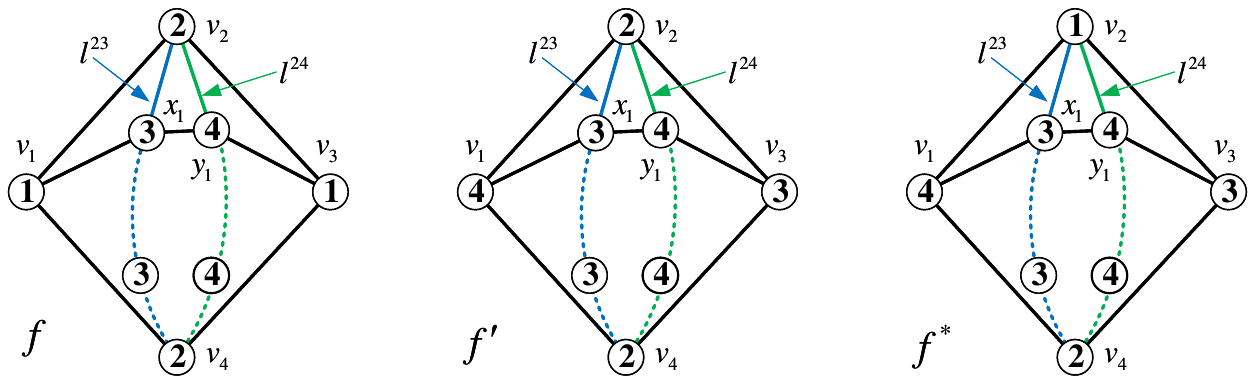}\\
  \caption {Illustration for the proof of Theorem \ref{thm4-3}}\label{figg4-5}
\end{figure}

\begin{proof}
By the definition of $\ell^{23}$ and $\ell^{24}$, $f$ contains at least two 13-components and two 14-components. Note that $G^{C_4}[N_{G^{C_4}}(v_2)]$ is a path of length 3, denoted by $v_1x_1y_1v_3$, where $f(x_1)=3$ and $f(y_1)=4$; see Figure \ref{figg4-5} (a).

Based on $f$, we carry out a $K$-change for the 14-component of $f$ containing $v_1$ and carry out a $K$-change for the 13-component of $f$ containing $v_3$, and denote by $f'$ the resulting coloring;  see Figure \ref{figg4-5} (b). Clearly, $f'_1$ contains  a 34-path $v_1x_1y_1v_3$ from $v_1$ to $v_3$. Therefore, a decycle coloring of $G^{C_4}$ can be obtained by changing the color of $v_2$ from 2 to 1, as shown in Figure \ref{figg4-5} (c). \qed
\end{proof}

According to Theorem \ref{thm4-3}, we can limit our attention to the situation where the module paths $\ell^{23}$ and $\ell^{24}$ intersect under module colorings. This implies that if a module coloring $f$ is used, then any two module paths $\ell^{23}$ and $\ell^{24}$ must have at least three vertices in common, as depicted in Figure \ref{fig3-4} (b). We can also prove that the 4-base module $G^{C_4}$ is decyclizable for this case by utilizing two novel tools, color-connected potential invariant and pocket operations. 

\begin{theorem}\label{decycle-thm}[\textbf{Decycle Theorem}]
Suppose that $G^{C_4}$ is a 4-base module with a module coloring $f$ such that $P^f_{23}(v_2,v_4)\neq \emptyset$ and $P^f_{24}(v_2,v_4)\neq \emptyset$, where $C_4=v_1v_2v_3v_4v_1$. If  $d_{G^{C_4}}(v)\geq 5$ for every $v\in V(G^{C_4})\setminus \{v_1,v_2,v_3,v_4\}$, then there is a 4-coloring $f^* \in C_4^0(G^{C_4})$ satisfying $f^*(v_2) \neq f^*(v_4)$.
\end{theorem}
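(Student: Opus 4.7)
My plan is to transform $f$ through a sequence of Kempe changes and local recolorings into a coloring $f^{\star}\in C_4^0(G^{C_4})$ in which the 23-component of $v_2$ no longer contains $v_4$; a final Kempe change on that component then yields $f^{\star}(v_2)\neq f^{\star}(v_4)$. Theorem \ref{thm4-3} disposes of the parallel sub-case when $d(v_2)=4$, and an analogous argument covers the other parallel configurations, so I may restrict attention to the harder case where every 23-module path and every 24-module path of every coloring in $F_2^f(G^{C_4})$ intersect in at least three common vertices, producing a shared ``backbone'' $B$ through the interior of $C_4$.

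\textbf{Extracting local templates.} Using $d_{G^{C_4}}(u)\geq 5$ for every interior $u$, I would combine Euler's formula for the SMPG $G^{C_4}$ with a discharging argument restricted to the interior of $C_4$ in order to extract a short list of unavoidable sub-configurations that must occur on or adjacent to $B$. The degree lower bound is crucial here: it rules out the rigid low-degree obstructions that would otherwise force the two module paths into lockstep, and it creates enough combinatorial slack around $B$ for a controlled local move to be applicable.

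\textbf{Pocket moves driven by a potential.} For each template I would prescribe a \emph{pocket move}: a specified permutation of colours inside a small subgraph of $G^{C_4}$, chosen to preserve the 4-coloring on the boundary of that subgraph. To ensure termination I would introduce a non-negative integer potential $\Phi(f)$ that measures the entanglement of the 23- and 24-subgraphs near $C_4$; a natural first candidate is the sum of lengths of the shortest $\ell^{23}$ and $\ell^{24}$, weighted by the number of vertices they share along $B$. The key claim would be that every coloring with $f(v_2)=f(v_4)$ that does not already admit a direct Kempe decyclization admits a pocket move which either immediately produces a decycle coloring or strictly decreases $\Phi$; since $\Phi\geq 0$ is integer-valued, the process terminates.

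\textbf{Main obstacle.} The crux is to certify both the existence of an applicable pocket move in every non-decyclized state and that its application genuinely decreases $\Phi$ instead of creating new module paths that re-inflate it. Two effects must be monitored simultaneously. First, the interaction of pocket moves with the $\sigma$-operations of Remark 2: a pocket move can create a new bichromatic cycle whose $\sigma$-image lies in a distinct part of the Kempe-equivalence class, potentially voiding the gain. Second, the risk of being trapped in an equivalence class of mutually incompatible module colorings when no suitable interior 34-component is available to reroute colours. Establishing the unavoidability of a reducible pocket adjacent to $B$ under the degree hypothesis, together with a uniform bound on the side-effect growth of $\Phi$ per move, is where I expect essentially all of the case analysis and the main difficulty to lie.
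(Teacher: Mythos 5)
Your proposal is a strategy outline, not a proof, and the gap is the entire technical core. You correctly identify the paper's announced toolkit --- a potential function measuring the entanglement of the 23- and 24-subgraphs and local ``pocket'' recolorings that decrease it --- but every load-bearing claim is stated as something you \emph{would} establish rather than something you \emph{have} established: the unavoidable list of sub-configurations along the shared backbone is never exhibited, the pocket moves are never specified, the potential $\Phi$ is only a ``natural first candidate,'' and the two failure modes you flag (a pocket move spawning a new bichromatic cycle whose $\sigma$-image undoes the gain, and the absence of a usable interior 34-component for rerouting) are named but not excluded. You acknowledge this yourself in the final paragraph. Since the theorem is precisely the statement that these obstructions can always be overcome, deferring them means nothing has been proved. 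Note also that the reduction via Theorem \ref{thm4-3} to the intersecting case is legitimate only for $d_{G^{C_4}}(v_2)=4$ as that theorem is stated; your claim that ``an analogous argument covers the other parallel configurations'' is itself an unproved assertion, since the argument there leans on $G^{C_4}[N_{G^{C_4}}(v_2)]$ being a path of length 3.

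For calibration: the paper itself does not prove this theorem here either --- it explicitly defers the proof to the second installment of the series, citing the same two tools (the color-connected potential and the pocket operations) that your sketch gestures at. So your outline is consistent in spirit with the intended argument, but there is no completed proof on either side to compare against, and as submitted your text cannot be accepted as a proof of the statement.
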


The proof of the decycle theorem will be presented in the second part of this series of articles. Based on the decycle theorem, a mathematical proof of FCC can be provided. This paper transforms FCC into a decycle problem of  4-base modules, thus termed the ``transformation step for the mathematical proof of FCC.''

The upcoming installment in this article series will showcase the proof of the decycle theorem. Leveraging this theorem, a mathematical proof of FCC can be provided. Specifically, this paper transforms FCC into the decycle problem of 4-base modules,  which represents a crucial stage in realizing the mathematical proof of FCC.


\begin{theorem}\label{main}
Every maximal planar graph $G$ with $\delta(G)=5$ is 4-colorable.
\end{theorem}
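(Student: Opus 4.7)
The strategy is by contradiction on a minimum counterexample. Let $G$ be an MPG with $\delta(G)=5$ that is not 4-colorable and has the smallest order among all such graphs; by the inductive hypothesis, every smaller MPG is 4-colorable (via this theorem for the $\delta=5$ subcase and Kempe's classical argument for smaller minimum degree). Since $\delta(G)=5$, pick $v$ with $d_G(v)=5$ and cyclic neighborhood $v_1v_2v_3v_4v_5$. Applying the contracting 5-wheel operator $\zeta^-_5$ of Section \ref{sec:ecsystem} at $v$ yields a smaller MPG $G'$, which is 4-colorable by minimality; the inverse $\zeta^+_5$ then pulls a 4-coloring of $G'$ back to a proper 4-coloring $f$ of $G\setminus v$, with the two contracted vertices of $N_G(v)$ receiving the same color as prescribed in Remark 6.

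If $|f(N_G(v))|\leq 3$, we color $v$ with an unused color and obtain the contradiction at once. Otherwise $|f(N_G(v))|=4$ and, after relabeling, the cyclic color pattern along $v_1,\dots,v_5$ is $(1,2,3,4,1)$. The classical Kempe strategy at $v$ attempts $\{1,3\}$- and $\{1,4\}$-component swaps to eliminate the repeated color $1$ at $v_1,v_5$. Either swap succeeds unless the corresponding bichromatic component forms a path from $v_1$ to $v_3$ (resp.\ from $v_5$ to $v_4$), and the obstruction persists only if no sequence of iterated $K$-changes disentangles the pair. By Theorem \ref{thm2.1}, persistence of these bichromatic configurations under every $K$-change is exactly the UB-cycle condition underlying the UBCMPG framework; the two persistent bichromatic paths, closed through appropriate edges at $v$, should delimit a 4-cycle $C_4$ in $G$ whose colors under $f$ alternate $1,2,1,2$.

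With $C_4$ identified, the side of $C_4$ containing $v$ plays the role of an auxiliary SMPG $G_1^{C_4}$, while the opposite side is a candidate 4-base module $G^{C_4}$ in the sense of Section \ref{sec4-1}. The restriction $f|_{G^{C_4}}$ is then a module coloring satisfying $P^{f}_{23}(v_2,v_4)\neq\emptyset$ and $P^{f}_{24}(v_2,v_4)\neq\emptyset$ — these being precisely the paths responsible for the Kempe obstruction — and every interior vertex of $G^{C_4}$ has degree at least $5$ because $\delta(G)=5$. Theorem \ref{decycle-thm} therefore supplies a decycle coloring $f^*$ of $G^{C_4}$ with $f^*(v_2)\neq f^*(v_4)$. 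Grafting $f^*$ back into $G$ in place of $f|_{G^{C_4}}$, and recoloring $G_1^{C_4}$ compatibly (possible by the inductive hypothesis applied to the smaller graph obtained by capping $G_1^{C_4}$ with the identity module $B^4$ or a similar auxiliary construction), produces a new 4-coloring $f'$ of $G\setminus v$ in which at least one of $v_1,v_5$ carries a different color than under $f$. Hence $|f'(N_G(v))|\leq 3$, $v$ can be colored, and the non-colorability of $G$ is contradicted.

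The principal obstacle is the structural extraction in the second paragraph — translating the Kempe-persistent 5-wheel obstruction into the global fact that some 4-cycle $C_4$ in $G$ bounds a bona fide 4-base module. This requires tracking how the $\sigma$-operation (Remark 2) interacts with the local neighborhood of $v$, a case analysis along the tree-type, cycle-type, and cyclic-cycle-type classification of 4-base modules (and within the latter two, SMP-type versus MMP-type as in Section \ref{sec4-1}), and a verification that the auxiliary SMPG $G_1^{C_4}$ really does complete $G^{C_4}$ to a UBCMPG as the definition of base modules demands. Once the base-module identification is secured, the Decycle Theorem closes the argument in a uniform manner.
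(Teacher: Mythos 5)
There is a genuine gap, and it sits exactly where you flag ``the principal obstacle'': the passage from a failed Kempe argument at the 5-wheel to a 4-cycle $C_4$ of $G$ bounding a 4-base module. The obstructing objects in the classical argument are a $13$-path and a $14$-path emanating from the two rim vertices colored $1$; they share the color $1$ and do not close into a $\{1,2\}$-bichromatic 4-cycle, and Theorem \ref{thm2.1} only characterizes UB-cycles that are already present --- it does not manufacture one out of Kempe-persistence of paths. Even granting such a $C_4$, being a 4-base module requires (by definition, or via the criterion of Theorem \ref{thm3.4}) exhibiting a companion SMPG that completes one side to a UBCMPG, which you do not attempt. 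The final grafting step is also unsupported: once $f^*$ breaks the boundary colors ($f^*(v_2)\neq f^*(v_4)$, so $C_4$ now carries three colors), you need the side $G_1^{C_4}$ containing $v$ to admit a 4-coloring agreeing with $f^*$ on $C_4$; capping it with $B^4$ and invoking the induction hypothesis does not give this, both because the capped graph need not have minimum degree 5 and because 4-colorings of the two sides need not be gluable along the shared 4-cycle. (Your initial reduction via $\zeta^-_5$ has the same defect: the contracted graph need not have $\delta=5$, so the stated induction hypothesis does not apply to it.)

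The paper's proof reaches the same engine (Theorem \ref{decycle-thm}) by a construction that sidesteps each of these issues: it deletes the degree-5 vertex $v_2$, shows that the five possible positions of the bichromatic 2-path on the rim $C_5$ are all reachable from one another by $K$-changes, uses Wernicke's 55/56-configuration theorem to pin down where that 2-path sits, and then applies an E4WO to the 2-path $v_3v_4v_1$. This produces a \emph{larger} MPG $G^*$ of order $n+3$ in which $C_4=v_1v_2v_3v_4v_1$ automatically bounds the identity module $B^4$ (interior $\{x,v'_4\}$) on one side and the candidate $G^{*C_4}=G^*-\{x,v'_4\}$ on the other; whether or not $G^{*C_4}$ is a 4-base module, Theorem \ref{thm3.4} or Theorem \ref{decycle-thm} supplies $f^*$ with $f^*(v_2)\neq f^*(v_4)$, the two interior vertices $x,v'_4$ are recolored by hand, and a C4WO (legal precisely because $f'''(v_4)=f'''(v'_4)$) collapses $G^*$ back to $G$ with a 4-coloring. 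In short, the paper \emph{creates} the 4-cycle by an extending operation rather than locating one inside $G$, and arranges for the ``other side'' to be just two vertices rather than an arbitrary SMPG; without an argument supplying these two features, your proposal does not go through.
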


\begin{proof}
Let us define a color set as $\{1,2,3,4\}$ and proceed to prove the theorem using induction on the order of graphs. It is worth noting that the icosahedron is the smallest MPG with a minimum degree of 5, which has 10 4-colorings. See Figure \ref{ico} for a visual representation of this graph and one of its 4-colorings. We assume that the conclusion holds for every MPG of minimum degree 5 with an order of at most $n$, where $n \geq 12$. We now examine the case where MPGs of minimum degree 5 have an order of $n+1$.

\vspace{-0.5cm}
\begin{figure}[H]
  \centering
  \includegraphics[width=3.2cm]{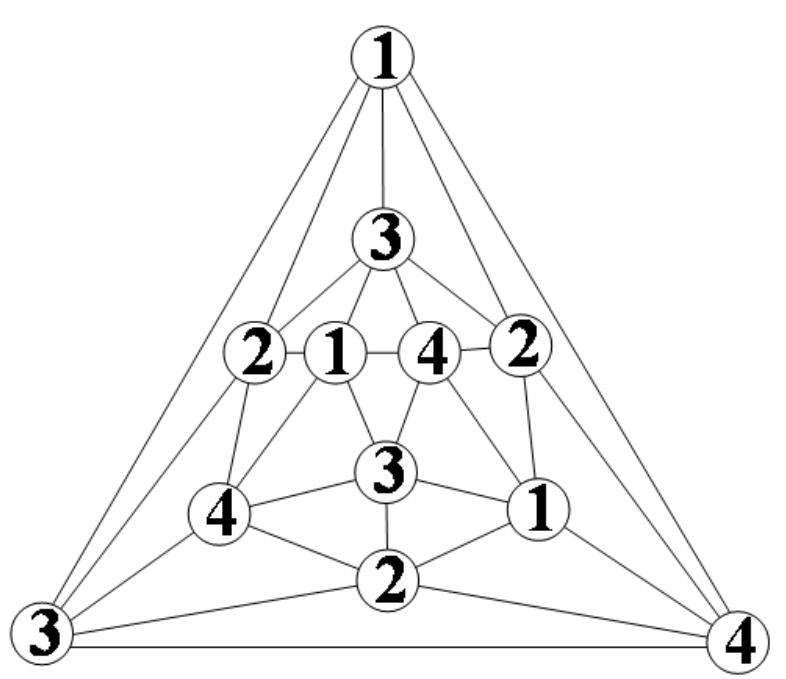}\\
  \caption {The smallest MPG of minimum degree 5 and one of its 4-coloring}\label{ico}
\end{figure}

Let $G$ be an $(n+1)$-order MPG with $\delta(G)=5$, and  $v_2$ an arbitrary vertex of degree $5$. Then, $G[N_G[v_2]]$ is 5-wheel $W_5$. We denote by  $C_5=y_1v_3v_4v_1x_1y_1$ the cycle of $W_5$, i.e., $W_5$=$v_2$-$y_1v_3v_4v_1x_1y_1$; see Figure \ref{figure5-1} (a), in which we use the 5-wheel $W_5$ to simply represent the graph $G$.  By the induction hypothesis, $G-v_2$ is 4-colorable, since $G-v_2$ is a subgraph of some MPG of order $n$. If there exists a 4-coloring $g \in C_4^0(G-v_2)$  such that $|g(C_5)|=3$, then $g$ can be extended to a 4-coloring of $G$ by coloring $v_2$ with the color $\{1,2,3,4\}\setminus g(C_5)$. So, we may assume that  $C_5$ is colored with four colors under every 4-coloring $f$ of $G-v_2$.  Without loss of generality, we assume that $f(v_3)=f(v_1)=1$, $f(v_4)=2$, $f(x_1)=3$, and $f(y_1)=4$; see Figure \ref{figure5-1} (a).

If $v_4$ and $y_1$  are not in the same 14-component of $f$, then we carry out a $K$-change for the 14-component of $f$ containing vertex $v_4$ (or $y_1$) and obtain a new 4-coloring of $G-v_2$, say $f'$. Then, $|f'(C_5)|=3$, which contradicts the above assumption. Therefore, we assume that  $f$ contains a 24-path  from $v_4$ to $y_1$; see Figure \ref{figure5-1} (b).  
 Clearly, $v_3$ and $x_1$ do not belong to the same 13-component of $f$. Then, based on $f$, we conduct
a $K$-change for the 13-component containing vertex $v_3$ and obtain a new 4-coloring of $G-v_2$, denoted by $f_1$. We have that $f_1(v_3)=f_1(x_1)=3, f_1(v_1)=1, f_1(v_4)=2$, and $f_1(y_1)=4$; see Figure \ref{figure5-1} (c). By a reverse process, we can also obtain $f$ from $f_1$. We use $`\sim$' to denote this relation between $f$ and $f_1$, i.e., $f\sim f_1$. 
With an analogous discussion as $f$, we see that $f_1$ contains a 14-path  from $v_1$ to $y_1$, and so $x_1$ and $v_4$ do not belong to the same 23-component of $f_1$; see Figure \ref{figure5-1} (d). We then conduct a $K$-change for the 23-component of $f_1$ that contains $x_1$ and obtain a new 4-coloring of $G-v_2$, denoted by $f_2$; see Figure \ref{figure5-1} (e).  Under $f_2$, there exists a 13-path  from $v_1$ to $v_3$, and $v_4$ and $y_1$ do not belong to the same 24-component of $f_2$; see Figure \ref{figure5-1} (f). By conducting a $K$-change for the 24-component of $f_2$ that contains $v_4$, we obtain a new 4-coloring $f_3$ of $G-v_2$; see Figure \ref{figure5-1} (g).
Finally, based on $f_3$, there is a 23-path from $v_3$ to $x_1$, impling that $y_1$ and $v_1$ do not belong to the same 14-component of $f_3$; see Figure \ref{figure5-1} (h). We conduct a $K$-change on the 14-component of $f_3$ that contains $y_1$ and obtain a new 4-coloring $f_4$ of $G-v_2$; see Figure \ref{figure5-1} (i).

\vspace{-0.5cm}
 \begin{figure}[H]
  \centering
  \includegraphics[width=10cm]{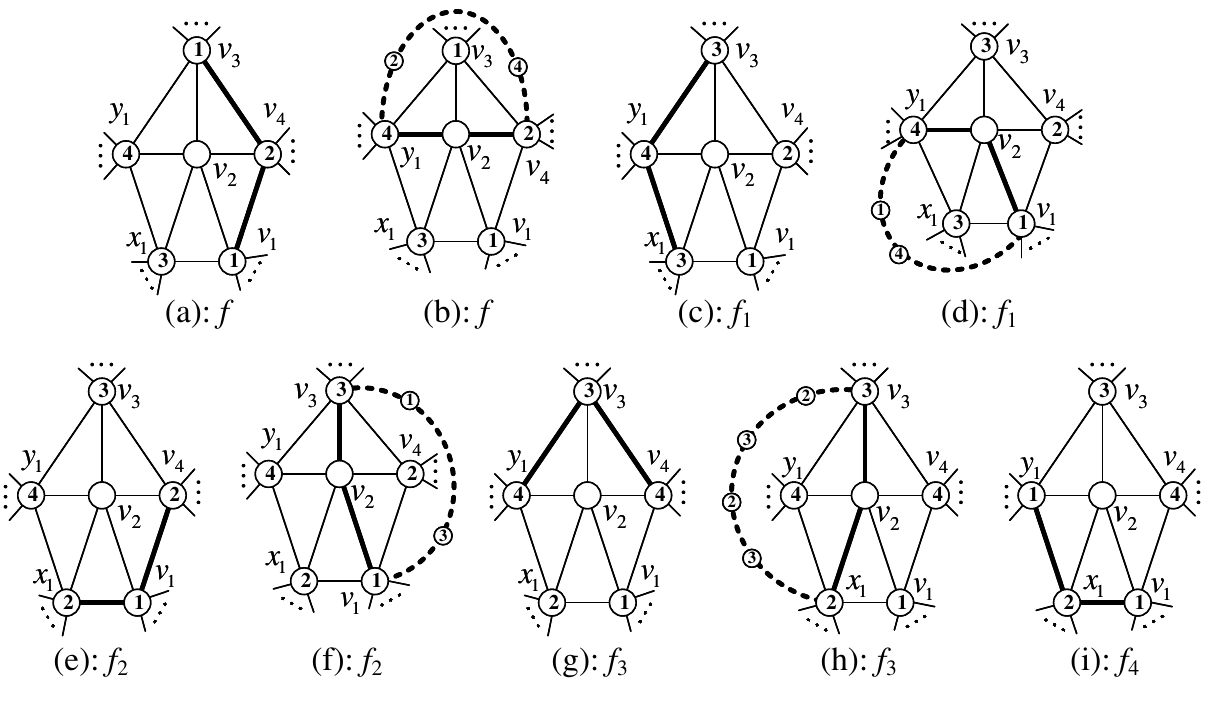}\\
  \caption {The equivalence of the five 4-colorings of $G$ in the condition that $C_5$ are colored with four colors}\label{figure5-1}
\end{figure}

Note that each 4-coloring in $\{f,f_1,f_2,f_3,f_4\}$ has a unique bichromatic 2-path on $C_5$ (marked with bold lines in Figure \ref{figure5-1}), and any two distinct colorings have distinct such bichromatic 2-paths. Therefore, considering only the 4-coloring restricted to $C_5$, there are five unique bichromatic 2-paths. This implies that $`\sim$' is an equivalence relation, and $f\sim f_1 \sim f_2 \sim f_3 \sim f_4 \sim f$.

In 1904, Wernicke \cite{r24} proved that every MPG of minimum degree 5 contains a 55-configuration $G_{5,5}$ or a 56-configuration $G_{5,6}$. Without loss of generality, we assume that the 5-wheel contained in $G_{5,5}$ and $G_{5,6}$ is $W_5$, the 5-wheel shown in Figure \ref{figure5-1} (a).  By the equivalency of 4-colorings in $\{f,f_1,f_2,f_3,f_4\}$, we further assume that under $f$ the bichromatic 2-path of $C_5$ is $v_3v_4v_1$, where $f(v_3)=f(v_1)=1, f(v_4)=2, f(x_1)=3$, and $f(y_1)=4$, as shown in Figures \ref{fig5-2} (a) and (b) for  $G_{5,5}$ and $G_{5,6}$, respectively. The gray vertices in Figures \ref{fig5-2} are colored with some colors under $f$ that can not identified.

Now, we will prove that $f$ can be extended to a 4-coloring of graph $G$. The given proof is for 55-configurations, and the process for 56-configurations is similar and thus omitted.

Suppose that $G$ contains $G_{5,5}$. We conduct an E4WO on the bichromatic 2-path $v_3v_4v_1$, and the resulting graph is shown in Figure \ref{fig5-2} (c), denoted by $G^*$, where $v'_4$ and $v_4$ are the two vertices replacing $v_4$ and $x$ is the center of the newly 4-wheel $x-v_3v_4v_1v'_4v_3$. Clearly, $G^*$ is an MPG of order $n+3$. 
Based on $f$, there is a 4-coloring $f'$ of  $G^*-v_2$ such that $f'(v)=f(v)$ for every $v\in V(G^*)\setminus \{v'_4,v_4,x,v_2\}$, $f'(v_4)=f'(v'_4)=2$, and  $f'(x)=3$ or $4$, where $v_2$ is the unique vertex in $G^*$ that is not assigned to a color under $f'$ (see Figure \ref{fig5-2} (c)). 
Furthermore, based on $f'$ we can obtain a 4-coloring $f''$ of $G^*$ by changing the colors of vertices in the 5-cycle $v_3v_4v_1x_1y_1v_3$, i.e., recoloring $v'_4$ with 3,  coloring $v_2$ with 2, and  recoloring $x$ with 4; see Figure \ref{fig5-2} (d). Observe that $f''(v'_4)\neq f''(v_4)$.  We are currently facing a problem on how to use $f''$ to generate a 4-coloring of $G^*$, denoted by $f'''$, such that  $f'''(v'_4)=f'''(v_4)$. Thus, based on $f'''$, by conducting a C4WO on the 4-wheel $x-v_3v_4v_1v'_4v_3$, we can obtain a 4-coloring of $G$.  In order to accomplish this, it is necessary to thoroughly analyze the attributes of $G^*$.

\vspace{-0.5cm}
\begin{figure}[H]
  \centering
  \includegraphics[width=10cm]{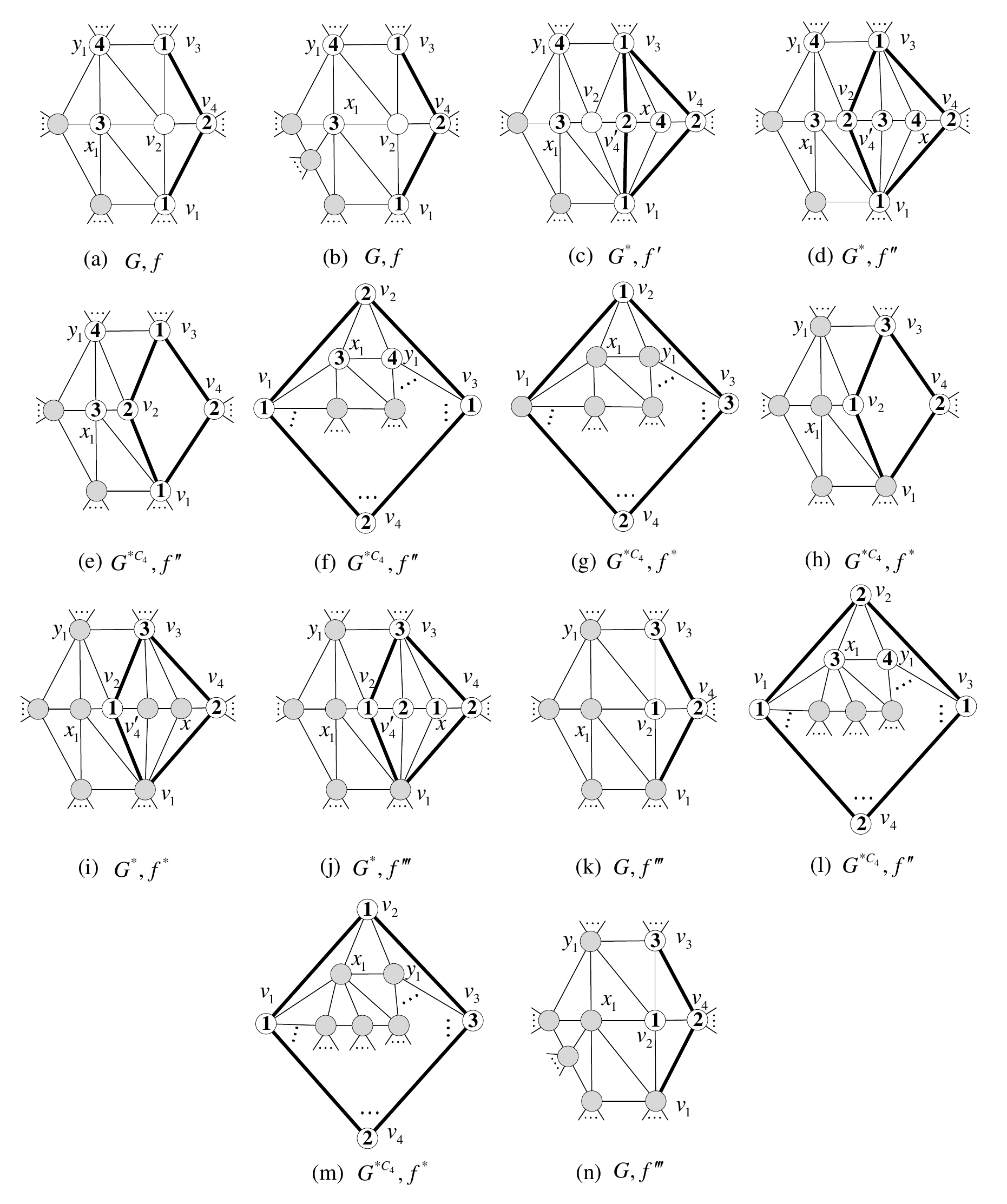}\\
  \caption {An illustration diagram of the proof of Theorem \ref{main}}\label{fig5-2}
\end{figure}

Note that the SMPG consisting of the 4-cycle $C_4=v_1v_2v_3v_4v_1$ and its interior vertices is the smallest 4-base module $B^4$. Moreover, $G^{*C_4}=G^*-\{x,v'_4\}$ is a 4-chromatic SMPG, and the restricted coloring of $f''$ to $G^{*C_4}$ is also dentoed by $f''$, as shown in Figure \ref{fig5-2} (e). By means of topological transformation, $G^{*C_4}$ can be plane embedded with $C_4$ as the outer face (unbounded face). Please refer to Figure \ref{fig5-2} (f) for a visual representation of the embedding. Clearly, $d_{G^{*C_4}}(v_2)=4$ and $d_{G^{*C_4}}(v_3)=5$. We consider two cases.

First, $G^{*C_4}$ is not a 4-base module. By Theorem \ref{thm3.4}, there exists a $f^*\in C_4^0(G^{*C_4})$ such that $f^*(v_2)\neq f^*(v_4)$.  

Second,  $G^{*C_4}$ is a 4-base module. By Theorem \ref{decycle-thm} (decycle theorem),   there also exists a $f^*\in C_4^0(G^{*C_4})$ such that $f^*(v_2)\neq f^*(v_4)$.  

As a result, there always exists a 4-coloring  $f^*\in C_4^0(G^{*C_4})$ such that $f^*(v_2)\neq f^*(v_4)$.  
Without loss of generality, we assume that $f^*(v_2)=1, f^*(v_4)=2$, and  $f^*(v_3)=3$, as shown in  Figure \ref{fig5-2} (g).

Now, by applying a topological transformation to $G^{*C_4}$, we restore $G^{*C_4}$ to its original structure displayed in Figure \ref{fig5-2} (e). The resulting graph after transformation is depicted in Figure \ref{fig5-2} (h), where the current coloring is $f^*$. To convert $G^{*C_4}$ back to $G^*$, we add vertices $v'_4$ and $x$ inside $C_4$ and connect them to vertices $v_1$, $v_2$, $v_3$, and $v_4$, as demonstrated in Figure \ref{fig5-2} (i). Notably, $v'_4$ and $x$ have pending coloring under $f^*$.  By coloring $v'_4$ with color 2 and $x$ with color 1, $f^*$ is extended to a 4-coloring of $G^*$, denoted as $f'''$, as shown in Figure \ref{fig5-2} (j). It is worth noting that $f'''(v'_4)$ and $f'''(v_4)$ are always 2, regardless of whether $f'''(v_1)$ is 3 or 4. Therefore, performing a C4WO on the 4-wheel $x-v_3v_4v_1v'_4v_3$ in $G^*$ yields the original MPG $G$. The restricted coloring of $f'''$ to $G$, denoted also as $f'''$, is a 4-coloring of $G$, as shown in Figure \ref{fig5-2} (k). This establishes the theorem for the case where $G$ contains a 55-configuration (Figure \ref{fig5-2} (a)).
For the situation where $G$ comprises a 56-configuration (Figure \ref{fig5-2} (b)), a similar proof can be presented. The analogous $G^{*C_4}$ and $f''$, $G^{*C_4}$ and $f^8$, and $G^*$ and $f^*$ for this situation are illustrated in Figures \ref{fig5-2} (l), (m), and (n). This completes the proof. \qed

\end{proof}

In order to further elucidate our method used to prove FCC, we take the well-known Heawood counterexample $G$ as an example to show the process of our proof in Theorem \ref{main}; see Figure \ref{fig5-3} (a), where$v_2$ is a vertex of degree 5 and $f$ is a 4-coloring of $G-v_2$. Then, $v_2$ is the unique vertex whose color is unfixed under $f$.  $G[N_G[v_2]]$ is a 5-wheel $W_5$ with cycle $C_5=y_1y_3v_4v_1x_1y_1$, where $f(v_3)=f(v_1)=1$, $f(v_4)=4$, $f(y_1)=3$, $f(x_1)=2$. Based on $f$, the bichromatic 2-path on $C_5$ is $v_3v_4v_1$. By conducting an E4WO on $v_3v_4v_1$ we obtain a graph $G^*$ with a natural coloring $f'$; see Figure \ref{fig5-3}(b). 
In  $G^*$, $v_4$ and $v'_4$ are obtained by splitting $v_4$ in $G$, and $x$ is the certer of the newly 4-wheel. Based on $f'$, a 4-coloring $f''$ of $G^*$ is obtained by coloring $v_2$ with 4 and $v'_4$ with 2, as shown in Figure \ref{fig5-3} (c). 
Now,  delete the two vertices $v'_4$ and $x$ from $G^*$, and a   4-chromatic SMPG $G^{*C_4}$ is obtained.  The restricted coloring of $f''$ to $G^{*C_4}$ is still denoted by $f''$, as shown in Figure \ref{fig5-3} (d). It is clear to see that $G^{*C_4}$ is a 4-base module and its module paths are between $v_2$ to $v_4$. According to Theorem \ref{decycle-thm} (decycle theorem ), there is a decycle coloring $f^*$ of $G^{*C_4}$, as shown in Figure \ref{fig5-3} (e). Furthermore, to convert $G^{*C_4}$ back to $G^*$,  we add vertices  $v'_4$ and $x$ inside $C_4$ and connect them to vertices $v_1$, $v_2$, $v_3$, and $v_4$, as demonstrated in Figure \ref{fig5-3} (f). The extended coloring of $f^*$ to $G^*$ is still denoted by  $f^*$, where $v'_4$ and $x$ are vertices with pending coloring. Clearly, by coloring $v'_4$ with color 4 and $x$ with color 3, $f^*$ is extended to a 4-coloring of $G^*$, denoted as $f'''$, as shown in Figure \ref{fig5-3} (g).  Since $f'''(v'_4)=f'''(v_4)=4$, we can obtain the original MPG $G$ by performing a C4WO on the 4-wheel $x-v_3v_4v_1v'_4v_3$ in $G^*$. The restricted coloring of $f'''$ to $G$, denoted also as $f'''$, is a 4-coloring of $G$, as shown in Figure \ref{fig5-3} (h).

\begin{figure}[H]
  \centering
  \includegraphics[width=10cm]{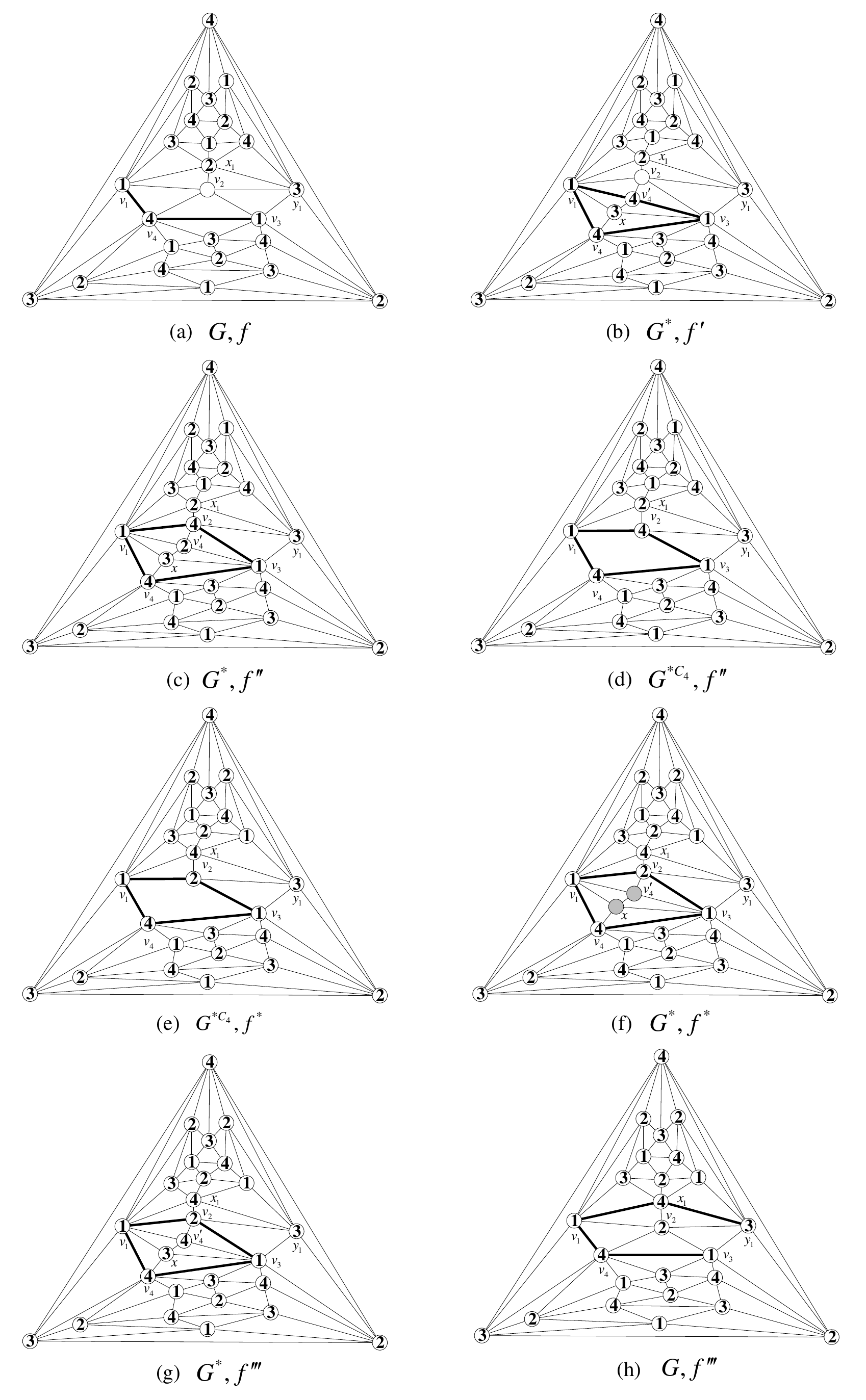}\\
  \caption {The process of coloring the Heawood counterexample by Theorem \ref{main}}\label{fig5-3}
\end{figure}

\textbf{Remark 7} In the above example, please refer to the second part of this series of articles to obtain a decycling coloring of $G^{*C_4}$.

\textbf{Remark 8}  Wernicke's findings \cite{r24} show that a maximal planar graph with a minimum degree of 5 must possess edges of two types: both endpoints have a degree of 5, or one endpoint has a degree of 5, and the other has a degree of 6. Borodin later expanded on this in 1992 \cite{r25}, demonstrating that in the set of triangles of such a graph, there must be one of four types of triangles. These are: each vertex has a degree of 5; two vertices have a degree of 5, and one has a degree of 6; two vertices have a degree of 5, and one has a degree of 7; or two vertices have a degree of 6, and one has a degree of 5. These configurations are respectively referred to as 555-configuration, 556-configuration, 557-configuration, and 566-configuration (since the subgraph induced by the three vertices of a triangle and their neighborhood forms a configuration). This paper specifically focuses on these four types of configurations and, as a result, the degree of $v_2$ is limited to 5, the degree of $x_1$ is limited to 5 or 6, and the degree of $y_1$ is limited to 5, 6, or 7 in general.

\end{document}